\newtheorem{theorem}{Theorem}
\newtheorem{proposition}{Proposition}
\newtheorem{lemma}{Lemma}
\newtheorem{corollary}{Corollary}
\theoremstyle{definition}
\newtheorem{example}{Example}
\newcommand{\spq}{s_{p,q}}
\title{The Orbits of the Action of the Cactus Group on Arc Diagrams}
\author{Matvey Borodin}
\begin{document}
\maketitle

\begin{abstract}
    The cactus group $J_n$ is the $S_n$-equivariant fundamental group of the real locus of the Deligne-Mumford moduli space of stable rational curves with marked points. This group plays the role of the braid group for the monoidal category of Kashiwara crystals attached to a simple Lie algebra. Following Frenkel, Kirillov and Varchenko, one can identify the multiplicity set in a tensor product of $\mathfrak{sl}_2$-crystals with the set of \emph{arc diagrams} on a disc, thus allowing a much simpler description of the corresponding $J_n$-action. We address the problem of classifying the orbits of this cactus group action. Namely, we describe some invariants of this action and show that in some (fairly general) classes of examples there are no other invariants. Furthermore, we describe some additional relations, including the braid relation, that this action places on the generators of $J_n$.
    
    
\end{abstract}

\section{Introduction}\label{introduction}

\subsection{Motivation}
 Kashiwara crystals attached to a simple Lie algebra $\mathfrak{g}$ are combinatorial models of representations of $\mathfrak{g}$, where the weight spaces are represented by finite sets and the action of Chevalley generators are represented by arrows connecting their elements in such a way that the weights match. The decomposition of a representation into a direct sum of irreducibles corresponds to the decomposition of a crystal into connected components. Kashiwara crystals naturally arise as indexing sets for canonical bases in finite-dimensional representations of the corresponding quantum group $U_q(\mathfrak{g})$, see \cite{Lu}.
 
 There is a well-defined tensor product operation on crystals that captures the combinatorial (Littlewood-Richardson) rules of decomposing tensor products of irreducible representations. This tensor product is not symmetric and not even braided. Rather, it has a different property that gives rise to a natural action of the \emph{cactus group} $J_n$ on any $n$-fold tensor product, as described in \cite{henriques2005crystals}. As defined in~\cite{DAVIS2003115}, $J_n$ is the $S_n$-equivariant fundamental group of $\overline{M_{0,n+1}}(\mathbb{R})$, the real locus of the Deligne-Mumford moduli space of stable rational curves with marked points. The cactus group $J_n$ can be defined using generators and relations. Consider the set of generators $\{\spq \mid 1 \leq p < q \leq n\}$. We define the following relations on these generators:
 \begin{itemize}
     \item $\spq^2 = e$ where $e$ is the identity for any $\spq$.
     \item $\spq s_{p',q'} = s_{p',q'} \spq$ if $q' < p$ or $q < p'$, that is the intervals $[p,q]$ and $[p', q']$ are disjoint.
     \item $\spq s_{p',q'} \spq = s_{p+q-q', p+q-p'}$ if $p \leq p' < q' \leq q$, that is the interval $[p', q']$ falls inside the interval $[p,q]$.
 \end{itemize}

In \cite{FKV} the (dual) canonical basis in a tensor product of finite-dimensional irreducible $U_q(\mathfrak{sl}_2)$-modules $V_{l_1}^q\otimes\dots\otimes V_{l_n}^q$ is constructed in terms of the Schechtman-Varchenko realization of $U_q$-modules in the homology of an appropriate local system on the configuration space of colored points on the complex line \cite{SV}. 

The canonical basis in the space of highest vectors in the $l_\infty$-weight space of the tensor product $V_{l_1}^q\otimes\dots\otimes V_{l_n}^q$ are given by \emph{complete arc diagrams}. We define a complete arc diagram as follows: consider real projective line with $n$ finite points labeled $z_1, z_2, \dots, z_n$ and a point at infinity labeled $z_\infty$. We picture this line as the boundary of a disc and construct non-intersecting arcs connecting the points such that each point $z_i$ or $z_\infty$ serves as the endpoint exactly $l_i$ (respectively, $l_\infty$) arcs. We denote the set of complete arc diagrams corresponding to $\{l_1, l_2, \dots, l_n, l_\infty\}$ up to continuous deformations by $X(l_1, l_2, \dots, l_n, l_\infty)$.

We denote the irreducible crystal of highest weight $l$ as $\mathcal{B}_l$. Note that the space of highest vectors of weight $l_\infty$ in $V_{l_1}^q\otimes\dots\otimes V_{l_n}^q$ is the same as the Hom-space $\text{Hom}(V_{l_\infty}^q,V_{l_1}^q\otimes\dots\otimes V_{l_n}^q)$, hence the set of complete arc diagrams is naturally in bijection with $\text{Hom}(\mathcal{B}_{l_\infty},\mathcal{B}_{l_1}\otimes\dots\otimes \mathcal{B}_{l_n})$. One can define the action of the cactus group $J_n$ on $X(l_1, l_2, \dots, l_n, l_\infty)$ in elementary terms, so that we have bijections of $J_n$-sets, as shown in~\cite{Markarian_Rybnikov}:
$$
\text{Hom}(\mathcal{B}_{l_\infty},\mathcal{B}_{l_1}\otimes\dots\otimes \mathcal{B}_{l_n})\longleftrightarrow X(l_1, l_2, \dots, l_n, l_\infty).
$$

\subsection{Main problems}

There is a relatively simple description of the action of the cactus group $J_n$ on the set $X(l_1, l_2,\dots,l_n, l_\infty)$. We begin by assigning numbers between 1 and $n$ to the points $z_1, \dots, z_n$ starting from $z_\infty$ and going clockwise. Then $\spq$ acts by reversing the order of all points which were assigned numbers between $p$ and $q$, inclusive, and reconnecting the arcs so that the arcs remain non-intersecting. Note that the step where the arcs are reconnected is uniquely determined.

Any element $s_{p_i, q_i}\dots s_{p_2, q_2}s_{p_1, q_1} \in J_n$ acts by applying the actions corresponding to $s_{p_j, q_j}$ going in order from right to left for $1 \leq j \leq i$.

There are two natural questions one can ask about this action. Firstly, we consider how many orbits this action has on the set $X(l_1, l_2,\dots,l_n, l_\infty)$ and whether there are any universal invariants of this action. Furthermore, since the cactus group is infinite while the set of arc diagrams is finite, we consider what additional relations are imposed on the cactus group by this group action.

\subsection{The results}

We begin by describing a number of visual invariants of this action including border thickness in Lemma~\ref{lemma:border_thickness}, the greatest common divisor of the counts of connecting lines in Lemma~\ref{lemma:gcf} and the number of components in Lemma~\ref{lemma:components_invariant}.

In Theorem~\ref{thm:transitivity}, we prove that if at least one of the $l_i$ values (or $l_\infty$) is equal to 1, the action of the cactus group on $X(l_1, l_2, \dots, l_n, l_\infty)$ is transitive. 

In Theorem~\ref{thm:l_n=2}, we prove that in the case when all $l_i = 2$ (and $l_\infty = 2$), all orbits are classified by an invariant which we call the number of components. This allows us to explicitly describe all orbits of this action for any $n$. This leads to Corollary~\ref{corr:counting_orbits} in which we show that the number of orbits grows proportionally to $n$ as $n \to \infty$. 

In Theorem~\ref{thm:braid_relation}, we prove that this action of the cactus group imposes the braid relation, $s_{i, i+1} s_{i-1, i} s_{i, i+1} = s_{i-1, i} s_{i, i+1} s_{i-1, i}$, on the cactus group for any set $X(l_1, l_2, \dots, l_n, l_\infty)$. This result follows from~\cite{chmutov2017berensteinkirillov}, but we provide a new, simpler proof. In the special case when $l_1 = l_2 = \dots = l_\infty$, we also prove that the action imposes the relation ${(s_{1,n}s_{1,n-1})}^{n(n+1)} = e$ on the cactus group in Theorem~\ref{thm:second_relation}.

\subsection{Organization of the paper} In Section~\ref{sec:background} we begin with basic definitions and examples of the cactus group $J_n$. In Section~\ref{sec:action_definition} we define the set of arc diagrams and the action of $J_n$ on this set. In Section~\ref{sec:simple_invariants} we describe simple invariants of the action of $J_n$. In Section~\ref{sec:n=3_description} we fully describe the action of $J_n$ in the case when $n=3$. In Section~\ref{sec:transitivity1} we prove that when one of the $l_i = 1$ (or $l_\infty = 1$), the action of the cactus group is transitive. In Section~\ref{sec:single_invariant} we completely describe the orbits in the action of $J_n$ if all $l_i = 2$. Finally, in Section~\ref{sec:simple_relation} we prove a two relations beyond those that generate $J_n$ is always satisfied when $J_n$ acts on the set of arc diagrams.

\subsection{Acknowledgements}\label{sec:acknowledgements}

I would like to thank my mentor, Prof.~Leonid Rybnikov, for suggesting this topic and guiding me in my exploration of it. I would also like to thank the MIT PRIMES Program for making this project possible.

\section{The Cactus Group}\label{sec:background}

We define the cactus group $J_n$ using generators and relations. Consider the set of generators $\{\spq \mid 1 \leq p < q \leq n\}$. We define the following relations on these generators:
\begin{itemize}
    \item $\spq^2 = e$ where $e$ is the identity for any $\spq$.
    \item $\spq s_{p',q'} = s_{p',q'} \spq$ if $q' < p$ or $q < p'$, that is the intervals $[p,q]$ and $[p', q']$ are disjoint.
    \item $\spq s_{p',q'} \spq = s_{p+q-q', p+q-p'}$ if $p \leq p' < q' \leq q$, that is the interval $[p', q']$ falls inside the interval $[p,q]$.
\end{itemize}

The following proposition, which can be found in \cite{mostovoy2018pure}, highlights an important aspect of the structure of this group:

\begin{proposition}\label{prop:homomorphism_to_S}
    There exists a surjective homomorphism $\phi: J_n \to S_n$, where $S_n$ is the group of permutations on $n$ elements under composition. 
\end{proposition}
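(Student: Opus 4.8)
The plan is to construct $\phi$ explicitly on generators and then invoke the universal property of a presentation (von Dyck's theorem): since $J_n$ is defined by the generators $\{\spq\}$ subject to the three listed relations, any assignment of the generators to elements of $S_n$ that respects those relations extends uniquely to a group homomorphism. Accordingly, I would send each $\spq$ to the permutation $\sigma_{p,q}\in S_n$ that reverses the block $[p,q]$, namely $\sigma_{p,q}(i)=p+q-i$ for $p\le i\le q$ and $\sigma_{p,q}(i)=i$ otherwise. It then remains only to check that the images $\sigma_{p,q}$ satisfy the defining relations of $J_n$, after which surjectivity is a short observation.

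First I would dispatch the two easy relations. Each $\sigma_{p,q}$ is an involution, since reversing a block twice restores every element to its original position, giving $\sigma_{p,q}^2=e$. For disjoint intervals $[p,q]$ and $[p',q']$, the permutations $\sigma_{p,q}$ and $\sigma_{p',q'}$ move only their respective blocks and hence have disjoint supports, so they commute, matching the second relation.

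The main step is the conjugation relation $\spq s_{p',q'}\spq = s_{p+q-q',\,p+q-p'}$ for $[p',q']\subseteq[p,q]$. Here I would use the standard fact that conjugation transports support: for any permutation $\tau$, the conjugate $\sigma\tau\sigma^{-1}$ acts on $\sigma(\mathrm{supp}\,\tau)$ as the $\sigma$-relabeling of $\tau$. Taking $\tau=\sigma_{p',q'}$ and $\sigma=\sigma_{p,q}$ (which equals its own inverse), the inclusion $[p',q']\subseteq[p,q]$ guarantees that the outer reversal carries the block $[p',q']$ onto the mirror block $[p+q-q',\,p+q-p']$, and the conjugate is again the reversal of that image block. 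A direct index check, tracking $j\mapsto \sigma_{p,q}(j)\mapsto \sigma_{p',q'}(\sigma_{p,q}(j))\mapsto \sigma_{p,q}(\sigma_{p',q'}(\sigma_{p,q}(j)))$, then confirms $\sigma_{p,q}\sigma_{p',q'}\sigma_{p,q}=\sigma_{p+q-q',\,p+q-p'}$, exactly the image of the right-hand side. I expect this index bookkeeping — keeping precise track of how the nested interval is relabeled under the outer reversal — to be the only genuinely fiddly part of the argument.

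Finally, surjectivity is immediate. The adjacent generators satisfy $\phi(s_{i,i+1})=\sigma_{i,i+1}=(i\ \ i+1)$, and the simple transpositions $(i\ \ i+1)$ for $1\le i\le n-1$ generate all of $S_n$. Hence the image of $\phi$ is all of $S_n$, completing the proof.
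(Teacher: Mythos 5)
Your proof is correct. Note that the paper itself gives no proof of this proposition---it is stated as a known result with a citation to \cite{mostovoy2018pure}---so there is no in-paper argument to compare against. Your construction is the standard one and is complete: sending $s_{p,q}$ to the block-reversal $\sigma_{p,q}(i)=p+q-i$ for $p\le i\le q$ (identity elsewhere), checking the three defining relations so that von Dyck's theorem applies, and observing that the adjacent generators map to the simple transpositions, which generate $S_n$. The only step with real content is the conjugation relation, and your bookkeeping is right: for $j$ outside $[p,q]$ or outside the mirrored block $[p+q-q',\,p+q-p']$ both sides fix $j$, while for $j$ in the mirrored block the composite gives
\[
j\;\longmapsto\;p+q-\bigl(p'+q'-(p+q-j)\bigr)\;=\;2(p+q)-p'-q'-j\;=\;(p+q-q')+(p+q-p')-j,
\]
which is exactly $\sigma_{p+q-q',\,p+q-p'}(j)$. (Since the left-hand side of that relation is a palindrome in involutions, no issue arises from the choice of composition convention.)
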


The kernel of this homomorphism is known as the \emph{pure cactus group}.

This gives us a simple example of a $J_n$ action.

\begin{example}
    The cactus group $J_n$ can act on the set of natural numbers $\{1, 2, \dots, n\}$. Consider $a \in J_n$ and $b \in \{1, 2, \dots, n\}$. We define $ab = \phi(a)b$ with $\phi$ as defined in Proposition~\ref{prop:homomorphism_to_S} and $\phi(a)b$ defined as the natural group action of $S_n$ on $\{1, 2, \dots, n\}$, that is the index to which $b$ is sent by the permutation. 
\end{example}

\section{The Action of \texorpdfstring{$J_n$}{Jn} on Arc Diagrams}\label{sec:action_definition}

We begin by defining the set on which $J_n$ will act. Consider the real projective line drawn as a circle. Let us define $n$ finite points on it, calling them $z_1, z_2, \dots, z_n$, as well as $z_\infty$, the point at infinity. Next, we draw some set of non-intersecting arcs connecting these points (with no restriction on how many arcs can connect any given pair). We will refer to this type of diagram as an arc diagram. 

Note that the points $z_1, z_2,\dots,z_n$ are not necessarily ordered, but we will always consider the leftmost point to be $z_\infty$. We describe this configuration using $n+1$ numbers denoted by $l_1, l_2, \dots, l_n, l_\infty$ with $l_i$ corresponding to the number of connections coming out of the point $z_i$. We also call $l_i$ the \emph{valence} of the points $z_i$. Figure~\ref{fig:simple_example} provides a simple example of an arc diagram.

\begin{figure}
    \begin{center}
        \begin{tikzpicture}

            \def\radius{1.5cm}
            \pgfmathtruncatemacro\numPoints{5} 
            
            \draw[line width=1pt] (0,0) circle (\radius);
            
            \pgfmathsetmacro\angle{360/\numPoints}
            
            \foreach \i in {1,...,\the\numexpr\numPoints-1\relax} {
                \coordinate (P\i) at (-1*\angle*\i+180:\radius);
            }
            \coordinate (P0) at (180:\radius);
            
            \draw[orange, line width=1.1pt] (P0) arc (288.0:360.0:1.0*\radius); 
            \draw[orange, line width=1.1pt] (P1) arc (216.0:288.0:1.0*\radius); 
            \draw[orange, line width=1.1pt] (P1) arc (200.39813553412063:303.6018644658794:0.75*\radius); 
            \draw[orange, line width=1.1pt] (P2) arc (115.60624956486156:172.39375043513843:2.0*\radius); 
            \draw[orange, line width=1.1pt] (P3) arc (72.0:144.0:1.0*\radius); 
            
            \foreach \i in {1,...,\the\numexpr\numPoints-1\relax} {
                \fill (P\i) circle (2pt) node[label=-1*\angle*\i+180:$z_{\i}$] {};
            }
            \fill (P0) circle (2pt) node[label=180:$z_{\infty}$] {};

            
            \end{tikzpicture}
    \end{center}
    \caption{An arc diagram for $n = 4$ with $l_1 = l_2 = 3$, $l_3 = l_\infty = 1$ and $l_4 = 2$.}\label{fig:simple_example}
\end{figure}
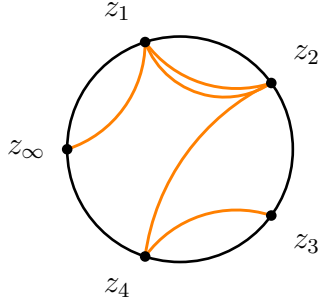

We define the set $X(l_1, l_2, \dots, l_n, l_\infty)$ as the set of all arc diagrams characterized by the given $l_i$ values. All possible configurations of $X(2, 2, 2, 2)$ can be seen in Figure~\ref{fig:X2222}. The actual set will contain 18 elements with each variant in Figure~\ref{fig:X2222} resulting in 6 elements due to relabeling of $z_1, z_2$, and $z_3$. 

\begin{figure}
    \begin{center}
        \begin{tikzpicture}
            \def\radius{1.3cm}
            \pgfmathtruncatemacro\numPoints{4} 
            \begin{scope}

            \draw[line width=1pt] (0,0) circle (\radius);
            
            \pgfmathsetmacro\angle{360/\numPoints}
            
            \foreach \i in {1,...,\the\numexpr\numPoints-1\relax} {
                \coordinate (P\i) at (-1*\angle*\i+180:\radius);
            }
            \coordinate (P0) at (180:\radius);
            
            
            \draw[orange, line width=1.1pt] (P0) arc (270.0:360.0:1.0*\radius); 
            \draw[orange, line width=1.1pt] (P0) arc (90.0:-7.105427357601002e-15:1.0*\radius); 
            \draw[orange, line width=1.1pt] (P1) arc (180.0:270.0:1.0*\radius); 
            \draw[orange, line width=1.1pt] (P2) arc (90.0:180.0:1.0*\radius); 

            \foreach \i in {1,...,\the\numexpr\numPoints-1\relax} {
                \fill (P\i) circle (2pt) node[label=-1*\angle*\i+180:$z_{i_{\i}}$] {};
            }
            \fill (P0) circle (2pt) node[label=180:$z_{\infty}$] {};
            \end{scope}

            \begin{scope}[shift={(4.5,0)}]
            
            
            \draw[line width=1pt] (0,0) circle (\radius);
            
            \pgfmathsetmacro\angle{360/\numPoints}
            
            \foreach \i in {1,...,\the\numexpr\numPoints-1\relax} {
                \coordinate (P\i) at (-1*\angle*\i+180:\radius);
            }
            \coordinate (P0) at (180:\radius);
            
            
            \draw[orange, line width=1.1pt] (P0) arc (270.0:360.0:1.0*\radius); 
            \draw[orange, line width=1.1pt] (P0) arc (244.4712206344907:385.5287793655093:0.75*\radius); 
            \draw[orange, line width=1.1pt] (P2) arc (90.0:180.0:1.0*\radius); 
            \draw[orange, line width=1.1pt] (P2) arc (64.47122063449068:205.5287793655093:0.75*\radius); 

            \foreach \i in {1,...,\the\numexpr\numPoints-1\relax} {
                \fill (P\i) circle (2pt) node[label=-1*\angle*\i+180:$z_{i_{\i}}$] {};
            }
            \fill (P0) circle (2pt) node[label=180:$z_{\infty}$] {};
            
            \end{scope}

            \begin{scope}[shift={(9,0)}]
            
                
                \draw[line width=1pt] (0,0) circle (\radius);
                
                \pgfmathsetmacro\angle{360/\numPoints}
                
                \foreach \i in {1,...,\the\numexpr\numPoints-1\relax} {
                    \coordinate (P\i) at (-1*\angle*\i+180:\radius);
                }
                \coordinate (P0) at (180:\radius);
                
                
                \draw[orange, line width=1.1pt] (P0) arc (90.0:-7.105427357601002e-15:1.0*\radius); 
                \draw[orange, line width=1.1pt] (P0) arc (115.52877936550932:-25.528779365509322:0.75*\radius); 
                \draw[orange, line width=1.1pt] (P1) arc (180.0:270.0:1.0*\radius); 
                \draw[orange, line width=1.1pt] (P1) arc (154.4712206344907:295.5287793655093:0.75*\radius); 

                \foreach \i in {1,...,\the\numexpr\numPoints-1\relax} {
                    \fill (P\i) circle (2pt) node[label=-1*\angle*\i+180:$z_{i_{\i}}$] {};
                }
                \fill (P0) circle (2pt) node[label=180:$z_{\infty}$] {};
                
            \end{scope}
            
        \end{tikzpicture}
    \end{center}
    \caption{A summary of all possible arc diagrams for $l_1 = l_2 = l_3 = l_\infty=2$ up to relabeling of points. The values $i_1$, $i_2$ and $i_3$ denote distinct natural natural numbers.}\label{fig:X2222}
\end{figure}
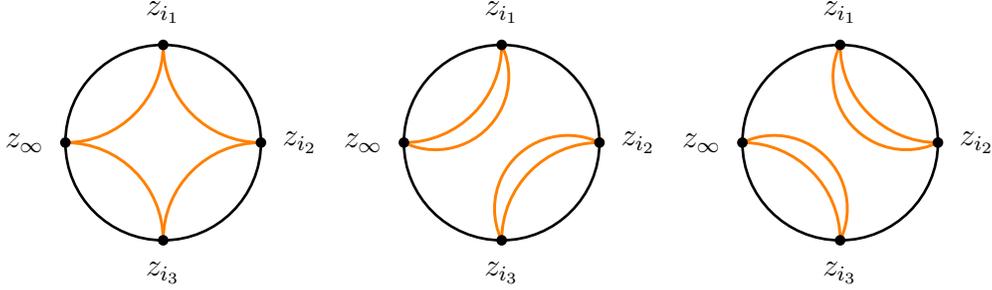

Now we define the action of $J_n$ on the set $X(l_1, l_2,\dots,l_n,l_\infty)$. We begin by assigning numbers between 1 and $n$ to the points $z_1, \dots, z_n$ starting from $z_\infty$ and going clockwise. Then $\spq$ acts by reversing the order of all points which were assigned numbers between $p$ and $q$, inclusive, and reconnecting the arcs so they remain non-intersecting. Note that the step where the arcs are reconnected is uniquely determined.

Any element $s_{p_i, q_i}\dots s_{p_2, q_2}s_{p_1, q_1} \in J_n$ acts by applying the actions corresponding to $s_{p_j, q_j}$ going in order from right to left. An example of a $J_4$ action on $X(3, 3, 1, 2, 1)$ can be seen in Figure~\ref{fig:J4_action_example}.

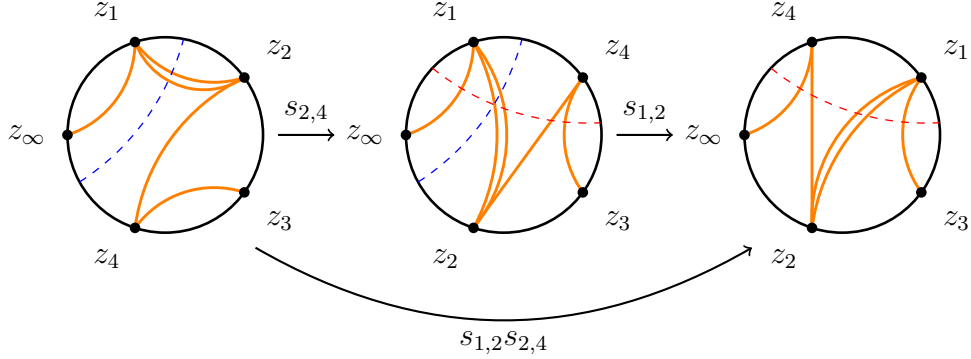
\begin{figure}
    \begin{center}
        \begin{tikzpicture}
            \def\radius{1.3cm}
            \pgfmathtruncatemacro\numPoints{5} 
            \begin{scope}

            \draw[line width=1pt] (0,0) circle (\radius);
            
            \pgfmathsetmacro\angle{360/\numPoints}
            
            \foreach \i in {1,...,\the\numexpr\numPoints-1\relax} {
                \coordinate (P\i) at (-1*\angle*\i+180:\radius);
            }
            \coordinate (P0) at (180:\radius);
            
            
            \draw[orange, line width=1.1pt] (P0) arc (288.0:360.0:1.0*\radius); 
            \draw[orange, line width=1.1pt] (P1) arc (216.0:288.0:1.0*\radius); 
            \draw[orange, line width=1.1pt] (P1) arc (200.39813553412063:303.6018644658794:0.75*\radius); 
            \draw[orange, line width=1.1pt] (P2) arc (115.60624956486156:172.39375043513843:2.0*\radius); 
            \draw[orange, line width=1.1pt] (P3) arc (72.0:144.0:1.0*\radius); 
            
            \draw[blue, dashed, line width=0.5pt] (180+0.4*\angle:\radius) arc (302.7811782102481:345.2188217897519:2.5*\radius); 

            \foreach \i in {1,...,\the\numexpr\numPoints-1\relax} {
                \fill (P\i) circle (2pt) node[label=-1*\angle*\i+180:$z_{\i}$] {};
            }
            \fill (P0) circle (2pt) node[label=180:$z_{\infty}$] {};
            \end{scope}

            \begin{scope}[shift={(4.5,0)}]
            
            
            \draw[line width=1pt] (0,0) circle (\radius);
            
            \pgfmathsetmacro\angle{360/\numPoints}
            
            \foreach \i in {1,...,\the\numexpr\numPoints-1\relax} {
                \coordinate (P\i) at (-1*\angle*\i+180:\radius);
            }
            \coordinate (P0) at (180:\radius);
            
            
            \draw[orange, line width=1.1pt] (P0) arc (288.0:360.0:1.0*\radius); 
            \draw[orange, line width=1.1pt] (P1) arc (28.393750435138447:-28.393750435138447:2.0*\radius); 
            \draw[orange, line width=1.1pt] (P1) arc (39.34864729642006:-39.34864729642006:1.5*\radius); 
            \draw[orange, line width=1.1pt] (P2) arc (144.0:216.0:1.0*\radius); 
            \draw[orange, line width=1.1pt, -] (P2) -- (P4); 

            \fill (P1) circle (2pt) node[label=-1*\angle*1+180:$z_{1}$] {};
            \fill (P2) circle (2pt) node[label=-1*\angle*2+180:$z_{4}$] {};
            \fill (P3) circle (2pt) node[label=-1*\angle*3+180:$z_{3}$] {};
            \fill (P4) circle (2pt) node[label=-1*\angle*4+180:$z_{2}$] {};
            \fill (P0) circle (2pt) node[label=180:$z_{\infty}$] {};

            \draw[blue, dashed, line width=0.5pt] (180+0.4*\angle:\radius) arc (302.7811782102481:345.2188217897519:2.5*\radius); 

            \draw[red, dashed, line width=0.5pt] (180-0.6*\angle:\radius) arc (230.78117821024807:273.2188217897519:2.5*\radius); 

            \end{scope}

            \begin{scope}[shift={(9,0)}]
            
                
                \draw[line width=1pt] (0,0) circle (\radius);
                
                \pgfmathsetmacro\angle{360/\numPoints}
                
                \foreach \i in {1,...,\the\numexpr\numPoints-1\relax} {
                    \coordinate (P\i) at (-1*\angle*\i+180:\radius);
                }
                \coordinate (P0) at (180:\radius);
                
                
                \draw[orange, line width=1.1pt] (P0) arc (288.0:360.0:1.0*\radius); 
                \draw[orange, line width=1.1pt, -] (P1) -- (P4); 
                \draw[orange, line width=1.1pt] (P2) arc (144.0:216.0:1.0*\radius); 
                \draw[orange, line width=1.1pt] (P2) arc (115.60624956486156:172.39375043513843:2.0*\radius); 
                \draw[orange, line width=1.1pt] (P2) arc (104.65135270357995:183.34864729642004:1.5*\radius); 
                
                \draw[red, dashed, line width=0.5pt] (180-0.6*\angle:\radius) arc (230.78117821024807:273.2188217897519:2.5*\radius); 
                

                \fill (P1) circle (2pt) node[label=-1*\angle*1+180:$z_{4}$] {};
                \fill (P2) circle (2pt) node[label=-1*\angle*2+180:$z_{1}$] {};
                \fill (P3) circle (2pt) node[label=-1*\angle*3+180:$z_{3}$] {};
                \fill (P4) circle (2pt) node[label=-1*\angle*4+180:$z_{2}$] {};
                \fill (P0) circle (2pt) node[label=180:$z_{\infty}$] {};
                
            \end{scope}

            \draw[thick, ->] (1.5, 0) -- (2.25, 0) node[midway, above] {{$s_{2, 4}$}};

            \draw[thick, ->] (6, 0) -- (6.75, 0) node[midway, above] {{$s_{1, 2}$}};

            \draw[thick, ->] (1.2, -1.5) to[out=-30,in=210] node[below] {$s_{1, 2} s_{2,4}$} (7.8, -1.5) ;

            
        \end{tikzpicture}
    \end{center}
    \caption{The element $s_{1,2}s_{2,4}$ acting on an element of $X(3, 3, 1, 2, 1)$.}\label{fig:J4_action_example}
\end{figure}

\begin{proposition}
    The action of $J_n$ on the set $X(l_1, l_2,\dots,l_n,l_\infty)$ described above is a group action.
\end{proposition}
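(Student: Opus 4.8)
The plan is to use the presentation of $J_n$ by generators and relations. Since the action is specified on the generators $\spq$ and extended to words by composition, to check that this yields a genuine group action it suffices to verify two things: first, that each $\spq$ induces a well-defined bijection of $X(l_1,\dots,l_n,l_\infty)$, so that we obtain a map from the free group on the $\spq$ into the symmetric group $\mathrm{Sym}(X)$; and second, that this map respects the three defining families of relations, i.e. that the images in $\mathrm{Sym}(X)$ of the two sides of each relation coincide. Once these hold, the map descends to a homomorphism $J_n \to \mathrm{Sym}(X)$, which is precisely the assertion that we have a group action.

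The key device is to realize each generator by an honest homeomorphism of the disc. Let $A$ be the shortest boundary arc containing precisely the points labeled $p,p+1,\dots,q$ and no others, let $C$ be the chord joining its two endpoints, and let $D_{p,q}$ be the sub-disc cut off by $C$. Define $f_{p,q}$ to be the reflection of $D_{p,q}$ across the perpendicular bisector of $C$, extended by the identity on the complement; since $f_{p,q}$ fixes $C$ pointwise it is a homeomorphism of the whole disc. By construction $f_{p,q}$ reverses the order of the marked points in the block, realizing on labels the reversal permutation $\rho_{p,q}$ that sends $i\in[p,q]$ to $p+q-i$, and it carries non-crossing arcs to non-crossing arcs. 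I would argue that pushing a diagram forward by $f_{p,q}$ and taking its isotopy class reproduces the defined action of $\spq$: the image is non-crossing, has the reversed labeling, and preserves the incidences of arcs with points, so by the stated uniqueness of the non-crossing reconnection it must equal $\spq\cdot\gamma$. Since isotopic homeomorphisms induce the same permutation of isotopy classes of arc diagrams, the entire problem reduces to verifying the relations among the $f_{p,q}$ up to isotopy.

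With this model the relations become geometric. A reflection is an involution, so $f_{p,q}^2=\mathrm{id}$; this simultaneously shows $\spq$ is a bijection and establishes $\spq^2=e$. When $[p,q]$ and $[p',q']$ are disjoint, the thickened sub-discs $D_{p,q}$ and $D_{p',q'}$ can be chosen disjoint, and homeomorphisms with disjoint support commute, giving the commutation relation. For the nested case $p\le p'<q'\le q$, I would conjugate: because $f_{p,q}$ is an involution, $f_{p,q}\,f_{p',q'}\,f_{p,q}$ is $f_{p',q'}$ conjugated by the reflection $f_{p,q}$, which is again a reflection, now supported on $f_{p,q}(D_{p',q'})\subseteq D_{p,q}$ and reflecting the block onto which $f_{p,q}$ carries $[p',q']$. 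On labels this is $\rho_{p,q}\rho_{p',q'}\rho_{p,q}$, and a direct computation with $\rho_{p,q}(i)=p+q-i$ shows it equals $\rho_{p+q-q',\,p+q-p'}$; the reflected block is therefore $[p+q-q',\,p+q-p']$, matching $s_{p+q-q',p+q-p'}$.

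The main obstacle is the nested relation. Two points require care: justifying rigorously that $f_{p,q}\,f_{p',q'}\,f_{p,q}$ is isotopic (not merely equal on labels) to $f_{p+q-q',\,p+q-p'}$, for which the homeomorphism realization is essential, since equality of the induced label permutations is necessary but not by itself sufficient to force equality of the arc diagrams; and confirming that the homeomorphism model faithfully computes the combinatorial action, which rests entirely on the uniqueness of the non-crossing reconnection asserted in the definition. The label-level identity $\rho_{p,q}\rho_{p',q'}\rho_{p,q}=\rho_{p+q-q',\,p+q-p'}$ serves as a useful consistency check, and it also recovers Proposition~\ref{prop:homomorphism_to_S} as the induced action on the labels alone.
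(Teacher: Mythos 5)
Your overall strategy is the same as the paper's: realize each generator as a reflection of the region cut off by the chord (the paper's \emph{shear line}), observe that a reflection is an involution, that generators with disjoint supports commute, and that the nested relation follows by conjugating one reflection by another; the label identity $\rho_{p,q}\rho_{p',q'}\rho_{p,q}=\rho_{p+q-q',\,p+q-p'}$ is likewise the computation the paper implicitly relies on. However, there is a genuine flaw in the device you declare essential. The map $f_{p,q}$ you define is not a homeomorphism of the disc: the reflection of $D_{p,q}$ across the perpendicular bisector of $C$ does not fix $C$ pointwise --- it reverses $C$, fixing only its midpoint --- so gluing it to the identity on the complement yields a map that is discontinuous along $C$. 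Nor is this repairable by a cleverer choice: any homeomorphism of the disc equal to the identity outside $D_{p,q}$ restricts (by continuity) to the identity on $C$, hence to a homeomorphism of the boundary arc $A$ fixing both endpoints, and such a map is monotone and cannot reverse the order of the marked points $z_p,\dots,z_q$. The operation $\spq$ is inherently a cut--reflect--reglue operation, not a push-forward by a disc homeomorphism: the arcs crossing $C$ really are severed and reconnected with their order of intersection along $C$ reversed, which is exactly the step the paper's proof makes explicit.

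Because you lean on ``isotopic homeomorphisms induce the same permutation of isotopy classes'' to verify the relations --- in particular the nested relation, where you conjugate $f_{p',q'}$ by $f_{p,q}$ as homeomorphisms --- the argument as written does not go through. The fix is to work directly with the combinatorial description, as the paper does: record for each generator (i) the permutation of the marked points, (ii) the reversal of the sequence of intersection points of arcs with the shear line, and (iii) the reflection of the interior of the active region, and check the three defining relations on this data, invoking the uniqueness of the non-crossing reconnection at each step. Your computations (involution, disjoint supports, the label identity) then carry over essentially verbatim, but they cannot be subsumed into a push-forward by a homeomorphism that does not exist.
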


\begin{proof}
    The fact that we have defined a valid action on the $z_i$'s (ignoring the connecting lines) follows directly from Proposition~\ref{prop:homomorphism_to_S}. Closure of this action on $X(l_1, l_2,\dots,l_n,l_\infty)$ follows from the fact that it preserves the values $l_1, l_2, \dots, l_n, l_\infty$. 

    To see that this action is also valid with regard to connecting lines, consider the shortest chord that sections off all $z_i$ affected by $\spq$ from the rest of the circle. We will refer to this chord as a \emph{shear line} and the region it creates on the side that does not contain $z_\infty$ as the \emph{active region}. Notice the shear lines marked in red and blue in Figure~\ref{fig:J4_action_example}. All connecting lines can be redrawn such that they cross the shear line no more than once. 
    Then, the action $\spq$ amounts to reflecting the active region over the line connecting the center of the shear line to the center of the arc bounding the active region. The broken connecting lines are then reconnected and their original order along the shear line is reversed. Nothing outside of the active region is affected.  
    Now consider the relations which define $J_n$:
    \begin{itemize}
        \item If we apply $\spq$ twice, the order along the shear line is reversed twice and the contents of the region are reflected twice over the same axis, returning to the original configuration.
        \item If we apply $\spq$ and $s_{p', q'}$ given that $[p,q] \cap [p', q'] = \emptyset$, their active regions do not intersect, therefore they can be applied in any order.
        \item If we apply $\spq s_{p', q'}\spq$ given that $[p',q'] \subset [p,q]$, consider the active region of $\spq$. When we apply $s_{p',q'}$ to the image of this region after $\spq$ is applied, this is equivalent to applying $s_{p+q-q', p+q-p'}$ to the preimage of this region and then applying $\spq$. Since $\spq^2=e$, this means that applying $\spq s_{p', q'}\spq$ is equivalent to applying $s_{p+q-q', p+q-p'}$. 
    \end{itemize}
    Thus, we have defined a valid group action. 
\end{proof}

Next, we present an important property of this action.

\begin{lemma}
    If there are two points $z_i < z_j$ such that $l_i = l_j$, they can be switched without altering the arc diagram. 
\end{lemma}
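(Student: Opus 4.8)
I read the statement as the assertion that interchanging two equal-valence points yields the same arc diagram, with only the two labels exchanged; equivalently, the underlying geometric data---the boundary positions together with the non-crossing family of arcs---is invariant under the transposition of $z_i$ and $z_j$. The plan is to separate an easy half from the substantive half. The easy half is that the labels $z_1,\dots,z_n$ enter an arc diagram only through the valences they record, since the arcs reference positions rather than names; hence, once the two points are interchanged, the hypothesis $l_i=l_j$ guarantees that no valence changes, so any non-crossing configuration realizing the swap again lies in $X(l_1,\dots,l_n,l_\infty)$.

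The substantive half is to exhibit such a realization and to check that it returns the \emph{same} picture, not merely a diagram with the same valences. I would first reduce to the case where $z_i$ and $z_j$ are adjacent, at positions $p$ and $p+1$, and analyze the elementary move $s_{p,p+1}$ using the shear-line description from the previous proof. That move reflects the two-point active region across the perpendicular bisector of its shear line and then re-glues the severed arcs in reversed order along the shear line. I would track three kinds of arcs: those disjoint from the active region (untouched); the arcs joining $z_i$ to $z_j$, which connect the position set $\{p,p+1\}$ to itself and so are reflected onto arcs between the same two positions; and the arcs crossing the shear line. For the last kind, the reflection permutes the crossings on the shear line while the reconnection rule reverses their order, and because $l_i=l_j$ the two points contribute equally many stubs, so the reversal exactly undoes the reflection and each crossing arc is reattached to its original pair of boundary positions. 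This shows the geometric diagram is unchanged and only the labels at $p$ and $p+1$ are exchanged. For a non-adjacent pair the easy half already applies directly, since relabeling two equal-valence points is a symmetry of the geometric diagram; alternatively one may write the transposition of positions $p$ and $q$ as $s_{p+1,q-1}\,s_{p,q}$ and apply the same analysis to each factor.

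The step I expect to be the main obstacle is the reconnection bookkeeping in the crossing-arc case: one must verify that composing the reflection of the shear-line crossings with the prescribed order-reversal is the identity permutation on arcs, and that this holds \emph{precisely} when the two swapped points send equally many stubs to each side. This is exactly where $l_i=l_j$ is used---with unequal valences the valence profile read off by position would itself change, so the geometric diagram could not be preserved---and I would make it rigorous by indexing the stubs consecutively along the shear line and computing the resulting permutation directly.
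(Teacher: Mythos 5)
Your analysis of the adjacent case is correct and in fact supplies the detail that the paper only asserts in one sentence: for $s_{p,p+1}$ applied to two adjacent points of equal valence $l$ with $m$ arcs between them, the stubs of the point at position $p$ occupy the first $l-m$ crossings of the shear line and those of the point at position $p+1$ the last $l-m$, so reflecting and regluing sends the external arc entering at each crossing back to the same boundary \emph{position}; the geometric diagram is unchanged and only the two labels are exchanged. Making this rigorous by indexing the stubs and computing the permutation, as you propose, is exactly the right completion, and equal valence is indeed where the argument lives or dies.

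The gap is in your reduction of the non-adjacent case, and both of your suggested routes fail. The first (``relabeling two equal-valence points is a symmetry of the geometric diagram'') proves the wrong statement: $X(l_1,\dots,l_n,l_\infty)$ distinguishes labelings (the paper counts $18=3\cdot 6$ elements of $X(2,2,2,2)$ precisely because relabelings are different elements), and the lemma is used via Corollary~\ref{corr:order_doesn't_matter} to conclude that the relabeled diagram lies in the same \emph{orbit}; for that you must exhibit an element of $J_n$ realizing the transposition while fixing the arcs, which a relabeling symmetry of the set does not provide. The second route, $s_{p+1,q-1}\,s_{p,q}$, does induce the transposition of positions $p$ and $q$ but does not fix the arcs: your shear-line computation relies on the active region containing exactly the two equal-valence points, and breaks once other points sit between them. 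Concretely, in $X(2,2,2,2)$ take the diagram with two arcs from $z_\infty$ to position $1$ and two arcs between positions $2$ and $3$; here $s_{p+1,q-1}=s_{2,2}$ is trivial and $s_{1,3}$ produces the diagram with two arcs from $z_\infty$ to position $3$ and two arcs between positions $1$ and $2$ --- a different element of $X$, even though the two swapped points have equal valence. The paper instead conjugates: it applies $s_{z_i+1,z_j}\,s_{z_i,z_i+1}\,s_{z_i+1,z_j}$, first moving $z_j$ adjacent to $z_i$ (which may scramble the arcs), then performing the adjacent swap (which changes nothing but the two labels), then undoing the first move using $s_{z_i+1,z_j}^2=e$, so the arcs are restored and only the two labels end up exchanged. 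Your adjacent-case argument slots directly into the middle of that conjugation, but without some such sandwich the general case is not established.
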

\begin{proof}
    Note that switching two adjacent points of the same valence does not affect the arc diagram. Thus, in order to satisfy the lemma, we can apply $s_{z_i + 1, z_j} s_{z_i, z_i+1} s_{z_i + 1, z_j}$, thus bringing the points together, switching them, and then applying the inverse of the operation that brought them together, thus returning the arc diagram to its original state. 
\end{proof}

This immediately leads to the following corollary:

\begin{corollary}\label{corr:order_doesn't_matter}
    Given any set of points with equal valence within an arc diagram, they can be arranged in an arbitrary order without changing any other property of the arc diagram.
\end{corollary}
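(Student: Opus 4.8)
The plan is to deduce this corollary from the preceding lemma by a short group-theoretic argument, since the lemma already supplies the crucial atomic move: any two points of equal valence can be interchanged while leaving every other feature of the arc diagram untouched. What remains is only to upgrade these pairwise swaps into an arbitrary rearrangement of a whole collection of equal-valence points, and I expect this to be routine rather than subtle.

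First I would fix the subset $S = \{z_{i_1}, z_{i_2}, \dots, z_{i_k}\}$ consisting of all points that share a common valence $l$, and observe that any desired rearrangement of these points amounts to choosing a permutation $\sigma \in S_k$ of their positions. Since every permutation factors as a composition of transpositions, it suffices to realize each transposition individually. But a transposition of two elements of $S$ is precisely a swap of two points of equal valence, which is exactly the move licensed by the preceding lemma. Composing the corresponding moves in the order dictated by a chosen factorization of $\sigma$ then produces the target arrangement.

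The one point requiring a moment of care — and the closest thing to an obstacle — is checking that the lemma remains applicable at every intermediate stage of this composition, since the lemma only guarantees a swap is harmless when the two points being exchanged have equal valence. This is immediate: because all points of $S$ carry the same valence $l$, any two of them still have equal valence regardless of how the earlier swaps have permuted their positions, so each transposition in the factorization is legitimate at the moment it is applied. As every individual move leaves the arc diagram unchanged — in particular preserving the valences of all points, both inside and outside $S$ — the whole composition does as well, which gives the claim.
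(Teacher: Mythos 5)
Your proof is correct and matches the paper's intent: the paper states this corollary as an immediate consequence of the preceding lemma without writing out a proof, and the argument it leaves implicit is exactly yours — decompose the desired permutation into transpositions and realize each one by the lemma's swap, noting that equal valence is preserved throughout so every intermediate swap is legitimate.
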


In essence, this means that the specific ordering of points with the same valence does not matter in the context of counting orbits. 


\section{Simple Invariants}\label{sec:simple_invariants}

Now we consider two simple invariants of arc diagrams when acted on by $J_n$.

We define the \emph{border thickness} as the minimum number of connections between adjacent vertices in an arc diagram. Let us call the border thickness $b$. Then, the \emph{border} is the set of connecting lines consisting of the $b$ connections closest to the outside of the diagram between each adjacent pair of vertices.

\begin{lemma}\label{lemma:border_thickness}
    The border thickness in arc diagrams is an invariant under actions of $J_n$.
\end{lemma}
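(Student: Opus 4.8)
The plan is to show that the border thickness $b$, defined as the minimum over all adjacent pairs of vertices of the number of arcs directly connecting them, is preserved by each generator $\spq$, since these generate $J_n$. The key observation is to use the geometric description of the action established in the previous proposition: $\spq$ reflects the \emph{active region} cut off by the shear line and reverses the order of the broken arcs along the shear line, leaving everything outside the active region untouched. So my first step is to argue that adjacencies entirely outside the active region, and the arcs realizing them, are literally unchanged, hence contribute the same counts to the minimum both before and after.

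The heart of the argument concerns the vertices affected by $\spq$, namely those with assigned labels $p, p+1, \dots, q$. First I would observe that the outermost arcs between two consecutive affected vertices $z_a, z_{a+1}$ are precisely those that never cross the shear line; these sit nearest the boundary of the disc within the active region. Reflecting the active region is an orientation-reversing isometry, so it carries the cyclic boundary order of the affected vertices to its reverse, mapping the consecutive pair $(z_a, z_{a+1})$ to a consecutive pair in the reflected diagram and carrying the outermost arcs between them bijectively to outermost arcs between the corresponding reflected neighbors. Thus the multiset of ``number of outermost connections between consecutive interior vertices'' is merely permuted by the reflection and its values are preserved. The slightly more delicate point is the pair of adjacencies straddling the two \emph{endpoints} of the shear line, i.e. the adjacency between $z_{p}$ (or rather the vertex just outside the active region on one side) and its affected neighbor, and symmetrically on the other side. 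Here I would note that the reflection swaps these two boundary positions, so the two ``edge'' adjacency counts are exchanged with one another; since $b$ is a minimum over all adjacencies, swapping two of the values in the minimand leaves the minimum unchanged.

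Putting these together, the full multiset of adjacent-connection counts is preserved as a multiset under $\spq$ (some values are permuted among interior adjacencies, the two straddling values are swapped, and all exterior values are fixed), and therefore its minimum $b$ is invariant. Since every element of $J_n$ is a product of the $\spq$, the border thickness is invariant under the whole group, as claimed.

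I expect the main obstacle to be making precise the claim that the outermost arcs between adjacent affected vertices are exactly the arcs that do \emph{not} cross the shear line, and that reflection carries them bijectively to the outermost arcs of the image pair. This requires verifying that an arc connecting two adjacent vertices can always be isotoped to avoid the shear line, and that the reflection respects the ``closest to the boundary'' ordering used in the definition of the border; a careful appeal to the non-intersection condition and to the uniqueness of the reconnection step noted in the definition of the action should suffice, but it is the step where the informal geometric picture must be pinned down rather than waved at.
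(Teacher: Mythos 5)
Your reduction to the generators, your handling of adjacencies lying entirely inside or entirely outside the active region, and your identification of the two adjacencies straddling the endpoints of the shear line as the delicate ones are all fine, and to that extent you are close to the paper. The gap is the claim that the two straddling counts are simply exchanged, and hence that the whole multiset of adjacent-connection counts is preserved: this is false. The reflection moves only the \emph{inside} halves of the arcs crossing the shear line; the outside halves stay put and are re-paired with the reversed sequence of inside halves, so an arc realizing a straddling adjacency is cut in two and its halves generally land in two new arcs, neither of which realizes an adjacency. Concretely, take $n=4$ with $l_\infty=1$, $l_1=0$, $l_2=2$, $l_3=0$, $l_4=1$ and arcs $z_\infty$--(position $2$) and (position $2$)--(position $4$). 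Every adjacent pair has $0$ direct connections, so the multiset of counts is $(0,0,0,0,0)$. Applying $s_{2,3}$ moves the valence-$2$ point to position $3$, and both reconnected arcs still end on it, so the new diagram has one arc between the adjacent positions $3$ and $4$: the multiset becomes $(0,0,0,0,1)$. The border thickness is still $0$, but your proposed invariant is not an invariant, so your proof of the lemma does not go through as written.

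What is true is only the invariance of the \emph{minimum} of the two straddling counts. Let $u$ (resp.\ $v$) be the number of crossings, counted from the end of the shear line lying between positions $p-1$ and $p$, whose outside (resp.\ inside) halves end at $y_{p-1}$ (resp.\ $y_p$); an innermost-region argument shows these crossings form initial segments of the crossing sequence, and similarly define $u^*,v^*$ from the other end. Then the straddling counts are $\alpha=\min(u,v)$ and $\beta=\min(u^*,v^*)$ before the action and, because the reversal pairs the $j$-th crossing from one end with the $j$-th from the other, $\alpha'=\min(u,v^*)$ and $\beta'=\min(u^*,v)$ after. These pairs need not agree as multisets (in the example $u=v^*=0$ and $v=u^*=1$), but $\min(\alpha,\beta)=\min(u,v,u^*,v^*)=\min(\alpha',\beta')$, which is exactly what the lemma needs. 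The paper sidesteps this computation by tracking the border itself: the $b$ outermost crossings at each end of the shear line belong to border arcs, and the order reversal matches them with the $b$ outermost crossings at the opposite end, so the border is carried to a border; the minimum therefore cannot decrease, and since $\spq$ is an involution it cannot increase either.
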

\begin{proof}
    When a $J_n$ action is applied, the connecting lines belonging to the border on one side of the shear line will be reconnected with the connecting lines which were part of the border on the other side, thus the border will be preserved.
\end{proof}

The second invariant is the greatest common factor of all the numbers of connecting lines between pairs of points.

\begin{lemma}\label{lemma:gcf}
    The greatest common factor of all counts of connecting lines between pairs of points is an invariant under actions of $J_n$.
\end{lemma}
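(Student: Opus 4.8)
The plan is to reduce the statement to a one-sided divisibility claim and then exploit invertibility of the action. Write $c_{ij}$ for the number of arcs joining $z_i$ and $z_j$, and set $d = \gcd\{c_{ij}\}$. Since each generator satisfies $\spq^2 = e$, the action of $\spq$ is its own inverse, so it suffices to prove the implication: \emph{if $d \mid c_{ij}$ for every pair $(i,j)$, then $d \mid c'_{ij}$ for every pair}, where the $c'_{ij}$ are the arc counts after applying a single generator $\spq$. Granting this, applying it with $d$ equal to the original gcd shows that the old gcd divides every new count, hence divides the new gcd; applying it instead to $\spq$ acting on the image (using $\spq^2 = e$ to return to the original diagram) gives the reverse divisibility, so the two gcd's coincide. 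This reduces everything to tracking divisibility through one move.

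Next I would analyze the move using the shear line and active region from the previous proposition. Arcs lying entirely inside the active region, and arcs lying entirely outside it, are merely reflected or fixed and keep both endpoints, so their contribution to the counts is unchanged. The only arcs whose endpoints change are those crossing the shear line; each joins an interior point to an exterior point, and we may assume each crosses exactly once. Reading the $k$ crossing points in order along the shear line, let arc $t$ join interior point $a_t$ to exterior point $b_t$. The move reflects the interior, reverses the order of the crossing points, and reglues, so the new arc at position $s$ joins the reflected image of $a_{k+1-s}$ to $b_s$; equivalently, the new interior–exterior incidences are exactly the pairs $(a_t, b_{k+1-t})$.

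The heart of the argument is a monotonicity observation forced by planarity. Because the interior points and the exterior points each lie on a single boundary arc and the arcs are non-crossing, after fixing an orientation of the shear line both sequences $t \mapsto a_t$ and $t \mapsto b_t$ are weakly monotone; in particular each set $\{t : a_t = z_i\}$ and each set $\{t : b_t = z_j\}$ is an interval of indices. The endpoints of $\{t : a_t = z_i\}$ are partial sums of the numbers of crossing arcs leaving each interior point, and such a number is $\sum_{j \text{ exterior}} c_{ij}$, a sum of $c_{ij}$'s, hence divisible by $d$; the same holds for the intervals $\{t : b_t = z_j\}$. Thus both families of index-intervals have endpoints divisible by $d$, and so does $\{t : b_{k+1-t} = z_j\}$, since its endpoints are obtained by subtracting the previous ones from $k$, the total number of crossing arcs, which is itself a multiple of $d$. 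The new crossing count between the image of $z_i$ and $z_j$ is the number of $t$ with $a_t = z_i$ and $b_{k+1-t} = z_j$, i.e.\ the length of the intersection of two index-intervals whose endpoints are all multiples of $d$; such an intersection is an interval with endpoints divisible by $d$, so its length is divisible by $d$. Hence $d \mid c'_{ij}$ for every pair, which with the first paragraph completes the proof.

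I expect the main obstacle to be pinning down the monotonicity together with the exact reconnection rule: one must verify that planarity genuinely forces both endpoint sequences along the shear line to be weakly monotone (for a suitable orientation), that the resulting interval boundaries are the relevant partial sums and therefore land on multiples of $d$, and that the boundary arcs incident to $z_\infty$ as well as the degenerate cases where some interior or exterior point contributes no crossing arc are all handled uniformly.
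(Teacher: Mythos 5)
Your proposal is correct and is essentially a rigorous elaboration of the paper's own argument: the paper's ``packets of $g$ adjacent lines which are all connected to the same point'' are exactly your index-intervals along the shear line with endpoints at multiples of the gcd, and ``all packets are connected to other packets'' is your intersection-of-intervals step after reversal. The only material you add is the explicit reduction to one-sided divisibility via $\spq^2 = e$ and the planarity/monotonicity verification, both of which the paper leaves implicit.
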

\begin{proof}
    Let us call this greatest common factor $g$. We can divide all connecting lines intersecting the shear line into packets of $g$ adjacent lines which are all connected to the same point. When an action is applied, all packets are connected to other packets, so $g$ is preserved.
\end{proof}

\section{Complete Description When \texorpdfstring{$n = 3$}{n=3}}\label{sec:n=3_description}

Throughout this section, we will use the labeling shown in Figure~\ref{fig:n=3_labeling}.

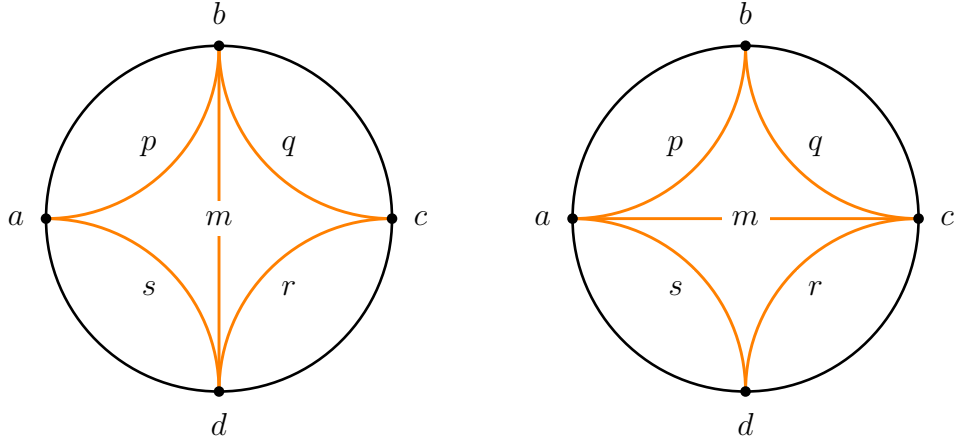
\begin{figure}
    \begin{center}
        \begin{tikzpicture}
            \def\radius{2.3cm}
            \pgfmathtruncatemacro\numPoints{4} 
            \begin{scope}

            \draw[line width=1pt] (0,0) circle (\radius);
            
            \pgfmathsetmacro\angle{360/\numPoints}
            
            \foreach \i in {1,...,\the\numexpr\numPoints-1\relax} {
                \coordinate (P\i) at (-1*\angle*\i+180:\radius);
            }
            \coordinate (P0) at (180:\radius);
            
            
            \draw[orange, line width=1.1pt] (P0) arc (270.0:360.0:1.0*\radius) node[midway, fill=white, text=black, above left] {$p$}; 
            \draw[orange, line width=1.1pt] (P0) arc (90.0:-7.105427357601002e-15:1.0*\radius) node[midway, fill=white, text=black, below left] {$s$}; 
            \draw[orange, line width=1.1pt] (P1) arc (180.0:270.0:1.0*\radius) node[midway, fill=white, text=black, above right] {$q$}; 
            \draw[orange, line width=1.1pt, -] (P1) -- (P3) node[midway, fill=white, text=black] {$m$}; 
            \draw[orange, line width=1.1pt] (P2) arc (90.0:180.0:1.0*\radius) node[midway, fill=white, text=black, below right] {$r$}; 

            \fill (P1) circle (2pt) node[label=-1*\angle*1+180:$b$] {};
            \fill (P2) circle (2pt) node[label=-1*\angle*2+180:$c$] {};
            \fill (P3) circle (2pt) node[label=-1*\angle*3+180:$d$] {};
            \fill (P0) circle (2pt) node[label=180:$a$] {};

            \end{scope}

            \begin{scope}[shift={(7,0)}]
            
            
            \draw[line width=1pt] (0,0) circle (\radius);
            
            \pgfmathsetmacro\angle{360/\numPoints}
            
            \foreach \i in {1,...,\the\numexpr\numPoints-1\relax} {
                \coordinate (P\i) at (-1*\angle*\i+180:\radius);
            }
            \coordinate (P0) at (180:\radius);
            
            
            \draw[orange, line width=1.1pt] (P0) arc (270.0:360.0:1.0*\radius) node[midway, fill=white, text=black, above left] {$p$}; 
            \draw[orange, line width=1.1pt] (P0) arc (90.0:-7.105427357601002e-15:1.0*\radius) node[midway, fill=white, text=black, below left] {$s$}; 
            \draw[orange, line width=1.1pt] (P1) arc (180.0:270.0:1.0*\radius) node[midway, fill=white, text=black, above right] {$q$}; 
            \draw[orange, line width=1.1pt, -] (P0) -- (P2) node[midway, fill=white, text=black] {$m$}; 
            \draw[orange, line width=1.1pt] (P2) arc (90.0:180.0:1.0*\radius) node[midway, fill=white, text=black, below right] {$r$}; 

            \fill (P1) circle (2pt) node[label=-1*\angle*1+180:$b$] {};
            \fill (P2) circle (2pt) node[label=-1*\angle*2+180:$c$] {};
            \fill (P3) circle (2pt) node[label=-1*\angle*3+180:$d$] {};
            \fill (P0) circle (2pt) node[label=180:$a$] {};

            \end{scope}

            
        \end{tikzpicture}
    \end{center}
    \caption{Standard labeling when $n=3$. Labels on the arcs correspond to the number of connections between the corresponding points and labels on points correspond to the valences of the points.}\label{fig:n=3_labeling}
\end{figure}

Note that whether the arc labeled $m$ will connect $b$ and $d$ or $a$ and $c$ for a given set $a,b,c,d\in \mathbb{Z}_{\geq 0}$ depends solely on whether $b + d > a + c$ or not (if they are equal, $m=0$). 

Since border thickness is an invariant by Lemma~\ref{lemma:border_thickness}, a description of the action of $J_3$ on diagrams with border thickness 0 describes the action of $J_3$ on all diagrams with $n=3$. The following lemmas allow us to describe the action of $J_3$:

\begin{lemma}\label{lemma:unique_given_hole_n=3}
    Given valence values and one of $p, q, r$ or $s = 0$ (the location of a break in the border), the arc diagram is uniquely defined.
\end{lemma}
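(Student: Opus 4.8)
The plan is to translate the combinatorial data of the arc diagram into a small linear system and to show that, once the valences are fixed, the only remaining freedom is a single integer parameter that the empty-gap hypothesis eliminates. Using the labeling of Figure~\ref{fig:n=3_labeling}, I would first note that the non-crossing condition forces the long chord $m$ to join exactly one of the pairs $\{b,d\}$ or $\{a,c\}$: the two possible long chords cross each other, so they cannot coexist. By the remark preceding the lemma, both this choice and the value of $m$ are determined by comparing $b+d$ with $a+c$, so I would record that $m=\lvert b+d-a-c\rvert/2$ and that its endpoints are dictated by the sign of $b+d-a-c$. This reduces the problem to pinning down the four border counts $p,q,r,s$.

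Next I would write the valence equations. Assuming for concreteness that $m$ joins $b$ and $d$, the requirement that each point carry its prescribed valence reads
\[
p+s=a,\qquad p+q=b-m,\qquad q+r=c,\qquad r+s=d-m,
\]
and I would observe that these are consistent precisely because $a+c=(b-m)+(d-m)$, which is exactly the identity defining $m$. The key structural point is that this system is the incidence system of a $4$-cycle and hence has rank $3$: setting $p=t$ forces $s=a-t$, $q=(b-m)-t$, and $r=c-(b-m)+t$, so the solution set is a single one-parameter family indexed by $t$ (the kernel direction being $(p,q,r,s)\mapsto(p{+}1,q{-}1,r{+}1,s{-}1)$). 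The case where $m$ joins $a$ and $c$, or where $m=0$, is identical after relabeling.

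Finally, I would invoke the hypothesis. Knowing that one distinguished border edge is empty fixes $t$ uniquely: $p=0$ gives $t=0$, $q=0$ gives $t=b-m$, and $r=0$ or $s=0$ pin $t$ down analogously. This determines all of $p,q,r,s$, and since the diagram is completely described by $(p,q,r,s,m)$ together with the endpoints of the $m$-arc---all now fixed---uniqueness follows.

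I expect the only genuine subtlety to be bookkeeping around the degenerate and symmetric cases: checking that the argument survives unchanged when $m=0$ (no long chord) and when $m$ attaches to $\{a,c\}$ rather than $\{b,d\}$, and confirming that the recovered counts are the nonnegative integers guaranteed by the fact that we began with an honest arc diagram. These verifications are routine, so the heart of the proof is the rank-$3$ computation exhibiting exactly one degree of freedom, which the empty-gap hypothesis then removes.
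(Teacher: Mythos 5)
Your proposal is correct and takes essentially the same approach as the paper: both translate the diagram into the linear system of valence equations and show that the single vanishing border count pins down the unique solution. The only organizational difference is that the paper keeps $m$ as a fourth unknown and checks that the resulting $4\times 4$ matrix (after setting, say, $p=0$) has determinant $2$, whereas you determine $m$ first from the sign of $b+d-a-c$ and then exhibit the one-dimensional kernel $(p,q,r,s)\mapsto(p+1,q-1,r+1,s-1)$ of the remaining $4$-cycle system, which handles the four cases uniformly.
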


\begin{proof}
    Without loss of generality, assume that $p=0$ and $b + d > c + a$ (the argument can be repeated analogously for any other option by permuting the variables). Then, we have the following series of linear equations:
    \begin{equation*} 
        \begin{pmatrix}
            1 & 1 & 0 & 0\\
            0 & 1 & 1 & 0\\
            1 & 0 & 1 & 1\\
            0 & 0 & 0 & 1
        \end{pmatrix}
        \begin{pmatrix}
            m\\
            q \\
            r \\
            s
        \end{pmatrix}
        =
        \begin{pmatrix}
            b\\
            c\\
            d\\
            a
        \end{pmatrix}
    \end{equation*}
    
    This matrix has determinant 2, so $m, q, r$ and $s$ are uniquely defined. 
\end{proof}

\begin{lemma}\label{lemma:only_2_diagrams}
    For a given set $a, b, c, d \in \mathbb{Z}_{\geq 0}$, there are at most 2 possible arc diagrams without a border. 
\end{lemma}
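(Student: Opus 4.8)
The plan is to translate the combinatorial count into a one-parameter linear system and then exploit monotonicity. First I would fix the valences $a,b,c,d$ and, using the remark immediately preceding the lemma, assume without loss of generality that $b+d \geq a+c$, so that the only diagonal that can occur is the arc $m$ joining $b$ and $d$; the opposite case $a+c \geq b+d$ is symmetric via the reflection swapping the two diagonals. Writing the valence condition at each of the four points gives $a = p+s$, $b = p+q+m$, $c = q+r$, and $d = r+s+m$. The first key observation is that these four equations in the five unknowns already pin down $m$: subtracting the equations in pairs yields $2m = (b+d)-(a+c)$, so $m = \tfrac{(b+d)-(a+c)}{2}$ is determined by the valences alone. (This is a non-negative integer, since the total number of arc endpoints $a+b+c+d$ is even, forcing $(b+d)-(a+c)$ to be even.)

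Next I would eliminate $m$ and parametrize the remaining solutions by the single free variable $s$, obtaining $p = a - s$, $r = \tfrac{a+c+d-b}{2} - s$, and $q = \tfrac{(b+c)-(a+d)}{2} + s$. The crucial structural feature is monotonicity: $p$ and $r$ strictly decrease in $s$, while $q$ and $s$ strictly increase. Imposing non-negativity of all four side-counts then confines $s$ to an interval $[s_{\min}, s_{\max}]$, where $s_{\min} = \max\bigl(0, \tfrac{(a+d)-(b+c)}{2}\bigr)$ and $s_{\max} = \min\bigl(a, \tfrac{a+c+d-b}{2}\bigr)$.

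Finally, I would use that a diagram has no border exactly when $\min(p,q,r,s) = 0$. For $s$ strictly inside the open interval $(s_{\min}, s_{\max})$, every one of the four defining inequalities holds strictly, so all of $p,q,r,s$ are positive and the border thickness is positive; hence the minimum can vanish only at the two endpoints $s = s_{\min}$ and $s = s_{\max}$. Since the value of $s$ determines the entire diagram, this yields at most two border-free diagrams (exactly one if the interval degenerates to a point, and none if it is empty), which is the claim. Equivalently, in the language of Lemma~\ref{lemma:unique_given_hole_n=3}, the break in the border can only sit at the arc forced to zero at the left end ($s$ or $q$) or at the right end ($p$ or $r$), giving at most two admissible break locations.

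The main obstacle is conceptual rather than computational: one must recognize that, once $m$ is forced, the feasible configurations form an interval on which the four boundary counts vary monotonically in opposite directions, so that the vanishing of their minimum is confined to the endpoints. Setting up this parametrization cleanly, and verifying that the interior inequalities are genuinely strict, is the only delicate point; the remainder is routine bookkeeping.
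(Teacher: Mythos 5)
Your argument is correct, and it reaches the conclusion by a genuinely different route from the paper. The paper first proves (Lemma~\ref{lemma:unique_given_hole_n=3}) that each choice of break location $p=0$, $q=0$, $r=0$ or $s=0$ determines at most one diagram, and then runs a case analysis on the inequalities among $a+b$, $c+d$, $b+c$, $a+d$ to show that at most two of the four break locations are realizable, with the equality cases $a+b=c+d$ (forcing $p=r$) and $b+c=a+d$ (forcing $q=s$) merging pairs of break locations into a single diagram. You instead solve the valence system once and for all: $m$ is forced to be $\tfrac{(b+d)-(a+c)}{2}$, the remaining solutions form a one-parameter family in $s$ on which $p,r$ decrease and $q,s$ increase, and border-free diagrams can only sit at the two endpoints of the feasible interval because all four counts are simultaneously strictly positive in its interior. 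The two proofs encode the same linear relations (your formula $q = s + \tfrac{(b+c)-(a+d)}{2}$ is exactly the paper's observation that $q=0$ is impossible when $b+c>a+d$, and your $p-r=\tfrac{(a+b)-(c+d)}{2}$ is its observation that $a+b=c+d$ forces $p=r$), but your interval-plus-monotonicity packaging replaces the casework with a single structural statement, handles the degenerate and empty cases uniformly, and makes transparent which of the two diagrams corresponds to which endpoint --- which is also useful context for the transitivity argument in Theorem~\ref{thm:n=3}. The one point worth stating explicitly is the justification of the initial reduction: if $b+d>a+c$ strictly, a diagonal joining $a$ and $c$ would force $2m=(a+c)-(b+d)<0$, so the orientation of $m$ is indeed determined by the valences, as the remark before the lemma asserts.
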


\begin{proof}
    By Lemma~\ref{lemma:unique_given_hole_n=3}, there can be at most one for each of $p, q, r$ and $s$ being 0. 

    If $b + c > a + d$, we cannot have $q = 0$ since that would mean $b + c = p + m + r \leq a + d$, which is a contradiction. Similarly, if $b + c < a + d$ we cannot have $s = 0$, if $a + b > c + d$ we cannot have $p = 0$ and if $a + b < c + d$ we cannot have $r = 0$. Note that at most two of these inequalities can be satisfied.
    
    If $a + b = c + d$, we must have $p = r$ so the cases $p = 0$ and $r = 0$ correspond to the same diagram and if $b + c = a + d$ we must have $q = s$ so the cases of $q = 0$ and $s = 0$ correspond to the same diagram. 

    Since exactly two of the cases described above are satisfied, there are no more than 2 possible arc diagrams. In other words, there is one arc diagram corresponding to the case when $p = 0$ or $r = 0$ and a second arc diagram corresponding to the case when $q = 0$ or $s = 0$ (recall that the orientation of the arc labeled $m$ is uniquely defined based on $a, b, c$ and $d$).
\end{proof}

This brings us to the description of the action of $J_3$. 

\begin{theorem}\label{thm:n=3}
    When $n=3$, the only invariant is the border thickness. In particular, all diagrams with the same border thickness lie in the same orbit.
\end{theorem}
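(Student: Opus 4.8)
The plan is to prove the two halves of the statement separately: that border thickness cannot be changed by the action, and that it is the only obstruction to two diagrams lying in the same orbit. The first half is immediate from Lemma~\ref{lemma:border_thickness}, so the real content is showing that any two diagrams of equal border thickness are in one orbit. My first move is to reduce to border thickness $0$. Since the action carries the border along rigidly (this is exactly the mechanism in the proof of Lemma~\ref{lemma:border_thickness}: the outermost $t$ arcs across the shear line are reconnected to the outermost $t$ arcs on the other side), stripping the $t$ outer layers of a diagram of border thickness $t$ commutes with the $J_3$-action and lowers every valence by $2t$. Hence two diagrams of border thickness $t$ lie in the same orbit if and only if their stripped, border-$0$ reductions do, and I may assume border thickness $0$ throughout.

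For border-$0$ diagrams I will use the structural input already in place: by Lemma~\ref{lemma:only_2_diagrams} there are at most two such diagrams for each assignment of valences to the positions $\infty,1,2,3$, and by Lemma~\ref{lemma:unique_given_hole_n=3} each is pinned down by the location of its empty boundary edge (the ``hole'') among the four arcs $p,q,r,s$ (joining $\infty$--$1$, $1$--$2$, $2$--$3$, $3$--$\infty$). The goal becomes: move the hole to any boundary edge and realize any arrangement of the finite points. I would establish this from two computations. First, the full reversal $s_{1,3}$ interchanges the two diagram types of Lemma~\ref{lemma:only_2_diagrams}, sending a diagram with $p=0$ (or $r=0$) to one with $s=0$ (or $q=0$) while reversing the order of the finite points; this is the clean case because, when $p=0$, only the arcs meeting $\infty$ cross the shear line, so the reconnection is unambiguous. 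Second, the transpositions $s_{1,2}$ and $s_{2,3}$ generate all of $S_3$ on the finite points (via Proposition~\ref{prop:homomorphism_to_S}) and, as a direct check shows, carry the hole between adjacent boundary edges; together with $s_{1,3}$ they let me reach every (hole-location, arrangement) pair, so all border-$0$ diagrams with the given valence multiset form one orbit. Re-attaching the stripped border then yields the theorem.

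The main obstacle is the bookkeeping in these generator computations: under $\spq$ the edges disjoint from the active region are untouched and an internal edge keeps its multiplicity, but the edges crossing the shear line must be re-routed by the (unique) planar reconnection, and one must simultaneously track the orientation of the single diagonal arc $m$, which flips according to whether $b+d$ exceeds $a+c$. The danger is mis-ordering the crossing arcs; this is resolved precisely by the non-crossing constraint, which forces a unique pairing of stubs along the shear line. I expect to organize the verification as a short table of the action of each generator on $(p,q,r,s,m)$ and on the arrangement, after which connectivity of the resulting graph on border-$0$ diagrams --- and hence transitivity --- follows by inspection, using Corollary~\ref{corr:order_doesn't_matter} to collapse arrangements that differ only by a reordering of equal-valence points.
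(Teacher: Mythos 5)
Your strategy is the paper's: reduce to border thickness $0$ (the paper makes this reduction in one sentence; your justification that stripping the border commutes with the action is the right one), use Lemmas~\ref{lemma:unique_given_hole_n=3} and~\ref{lemma:only_2_diagrams} to see that a border-$0$ diagram is determined by its arrangement together with which of the two types it belongs to, match arrangements via Proposition~\ref{prop:homomorphism_to_S}, and then finish by finite casework on generator words. The only organizational difference is that the paper exhibits the single word $s_{2,3}s_{1,3}s_{2,3}s_{1,2}$, which fixes the arrangement and swaps the two types, whereas you propose a table of all three generators followed by a connectivity argument.

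There is, however, one asserted step that a direct check does \emph{not} show, and since the entire content of the proof lives in exactly this check, it needs to be flagged. You claim that $s_{1,2}$ and $s_{2,3}$ carry the hole between adjacent boundary edges. They do not. Take a border-$0$ diagram with $p=0$ and the diagonal $m$ joining positions $1$ and $3$: the arcs crossing the shear line of $s_{1,2}$ are the $m$ arcs leaving position $1$ and the $r$ arcs leaving position $2$, all landing at position $3$, and the forced planar reconnection returns a diagram with $p'=0$ --- the hole does not move at all. Take instead $p=0$ with $m$ joining $z_\infty$ and position $2$: now the crossing arcs are the $r$ arcs (to position $3$) and the $m$ arcs (to $z_\infty$), both leaving position $2$, and the reconnection yields $r'=0$ --- the hole jumps to the \emph{opposite} edge. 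Fortunately this does not sink your plan: in both cases the type in the sense of Lemma~\ref{lemma:only_2_diagrams} (namely $\{p=0$ or $r=0\}$ versus $\{q=0$ or $s=0\}$) is preserved by $s_{1,2}$, and by the mirror symmetry the same holds for $s_{2,3}$, while $s_{1,3}$ swaps the two types; hence a word such as $s_{1,2}s_{2,3}s_{1,2}s_{1,3}$ fixes the arrangement and swaps the type, which is precisely what the paper's word accomplishes. So the approach succeeds, but only after the generator table is computed correctly; as written, the table you describe is not the one the computation produces.
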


\begin{proof}
    The fact that border thickness is an invariant is proven in Lemma~\ref{lemma:border_thickness}, so we only need to prove transitivity of the action of $J_3$ among arc diagrams with the same border thickness. Equivalently, we can prove the transitivity of the action of $J_3$ on diagrams with border thickness 0. 

    Given any two diagrams in the set $X(l_1, l_2, l_3, l_\infty)$, Proposition~\ref{prop:homomorphism_to_S} tells us that actions of $J_3$ can transform one of them so that the order of its points matches that of the other. If these diagrams are not the same, they must be the two possibilities described in Lemma~\ref{lemma:only_2_diagrams}. It remains to show that actions of $J_3$ can transform one into the other. 

    Based on casework, we can verify that the operation $s_{2,3}s_{1,3}s_{2,3}s_{1,2}$ does exactly this. An example of one such case is given in Figure~\ref{fig:n=3_proof_example}.

    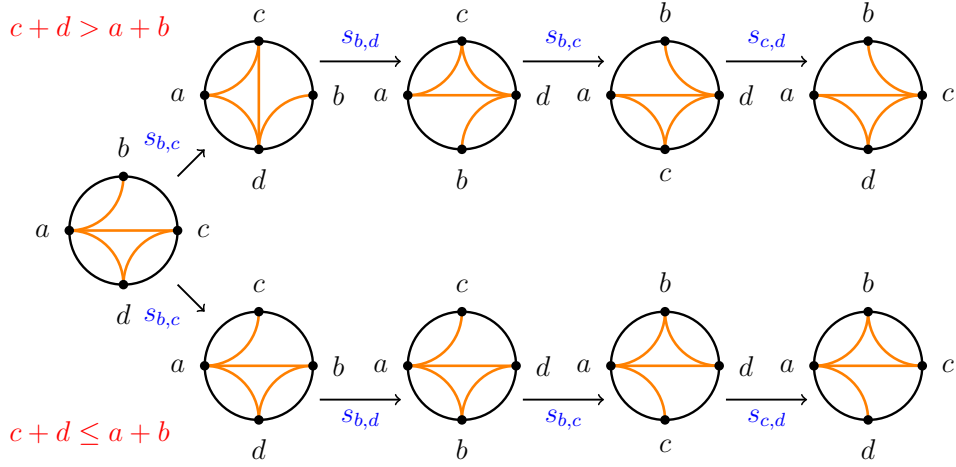
\begin{figure}
        \begin{center}
            \scalebox{0.9}{
            \begin{tikzpicture}
                \def\radius{0.8cm}
                \pgfmathtruncatemacro\numPoints{4} 
                \begin{scope}[shift={(0,0)}]

                \draw[line width=1pt] (0,0) circle (\radius);
                
                \pgfmathsetmacro\angle{360/\numPoints}
                
                \foreach \i in {1,...,\the\numexpr\numPoints-1\relax} {
                    \coordinate (P\i) at (-1*\angle*\i+180:\radius);
                }
                \coordinate (P0) at (180:\radius);
                
                
                \draw[orange, line width=1.1pt] (P0) arc (270.0:360.0:1.0*\radius); 
                \draw[orange, line width=1.1pt, -] (P0) -- (P2); 
                \draw[orange, line width=1.1pt] (P0) arc (90.0:-7.105427357601002e-15:1.0*\radius); 
                \draw[orange, line width=1.1pt] (P2) arc (90.0:180.0:1.0*\radius); 
    
                \fill (P1) circle (2pt) node[label=-1*\angle*1+180:$b$] {};
                \fill (P2) circle (2pt) node[label=-1*\angle*2+180:$c$] {};
                \fill (P3) circle (2pt) node[label=-1*\angle*3+180:$d$] {};
                \fill (P0) circle (2pt) node[label=180:$a$] {};
    
                \end{scope}

                \begin{scope}[shift={(2,-2)}]

                    \draw[line width=1pt] (0,0) circle (\radius);
                    
                    \pgfmathsetmacro\angle{360/\numPoints}
                    
                    \foreach \i in {1,...,\the\numexpr\numPoints-1\relax} {
                        \coordinate (P\i) at (-1*\angle*\i+180:\radius);
                    }
                    \coordinate (P0) at (180:\radius);
                    
                    
                    \draw[orange, line width=1.1pt] (P0) arc (270.0:360.0:1.0*\radius); 
                    \draw[orange, line width=1.1pt, -] (P0) -- (P2); 
                    \draw[orange, line width=1.1pt] (P0) arc (90.0:-7.105427357601002e-15:1.0*\radius); 
                    \draw[orange, line width=1.1pt] (P2) arc (90.0:180.0:1.0*\radius); 
        
                    \fill (P1) circle (2pt) node[label=-1*\angle*1+180:$c$] {};
                    \fill (P2) circle (2pt) node[label=-1*\angle*2+180:$b$] {};
                    \fill (P3) circle (2pt) node[label=-1*\angle*3+180:$d$] {};
                    \fill (P0) circle (2pt) node[label=180:$a$] {};
        
                \end{scope}

                \begin{scope}[shift={(5,-2)}]

                    \draw[line width=1pt] (0,0) circle (\radius);
                    
                    \pgfmathsetmacro\angle{360/\numPoints}
                    
                    \foreach \i in {1,...,\the\numexpr\numPoints-1\relax} {
                        \coordinate (P\i) at (-1*\angle*\i+180:\radius);
                    }
                    \coordinate (P0) at (180:\radius);
                    
                    
                    \draw[orange, line width=1.1pt] (P0) arc (270.0:360.0:1.0*\radius); 
                    \draw[orange, line width=1.1pt, -] (P0) -- (P2); 
                    \draw[orange, line width=1.1pt] (P0) arc (90.0:-7.105427357601002e-15:1.0*\radius); 
                    \draw[orange, line width=1.1pt] (P2) arc (90.0:180.0:1.0*\radius); 
        
                    \fill (P1) circle (2pt) node[label=-1*\angle*1+180:$c$] {};
                    \fill (P2) circle (2pt) node[label=-1*\angle*2+180:$d$] {};
                    \fill (P3) circle (2pt) node[label=-1*\angle*3+180:$b$] {};
                    \fill (P0) circle (2pt) node[label=180:$a$] {};
        
                \end{scope}

                \begin{scope}[shift={(8,-2)}]

                    \draw[line width=1pt] (0,0) circle (\radius);
                    
                    \pgfmathsetmacro\angle{360/\numPoints}
                    
                    \foreach \i in {1,...,\the\numexpr\numPoints-1\relax} {
                        \coordinate (P\i) at (-1*\angle*\i+180:\radius);
                    }
                    \coordinate (P0) at (180:\radius);
                    
                    
                    \draw[orange, line width=1.1pt] (P0) arc (270.0:360.0:1.0*\radius); 
                    \draw[orange, line width=1.1pt, -] (P0) -- (P2); 
                    \draw[orange, line width=1.1pt] (P0) arc (90.0:-7.105427357601002e-15:1.0*\radius); 
                    \draw[orange, line width=1.1pt] (P1) arc (180.0:270.0:1.0*\radius); 
                            
                    \fill (P1) circle (2pt) node[label=-1*\angle*1+180:$b$] {};
                    \fill (P2) circle (2pt) node[label=-1*\angle*2+180:$d$] {};
                    \fill (P3) circle (2pt) node[label=-1*\angle*3+180:$c$] {};
                    \fill (P0) circle (2pt) node[label=180:$a$] {};
        
                \end{scope}

                \begin{scope}[shift={(11,-2)}]

                    \draw[line width=1pt] (0,0) circle (\radius);
                    
                    \pgfmathsetmacro\angle{360/\numPoints}
                    
                    \foreach \i in {1,...,\the\numexpr\numPoints-1\relax} {
                        \coordinate (P\i) at (-1*\angle*\i+180:\radius);
                    }
                    \coordinate (P0) at (180:\radius);
                    
                    
                    \draw[orange, line width=1.1pt] (P0) arc (270.0:360.0:1.0*\radius); 
                    \draw[orange, line width=1.1pt, -] (P0) -- (P2); 
                    \draw[orange, line width=1.1pt] (P0) arc (90.0:-7.105427357601002e-15:1.0*\radius); 
                    \draw[orange, line width=1.1pt] (P1) arc (180.0:270.0:1.0*\radius); 
                            
                    \fill (P1) circle (2pt) node[label=-1*\angle*1+180:$b$] {};
                    \fill (P2) circle (2pt) node[label=-1*\angle*2+180:$c$] {};
                    \fill (P3) circle (2pt) node[label=-1*\angle*3+180:$d$] {};
                    \fill (P0) circle (2pt) node[label=180:$a$] {};
        
                \end{scope}

                \begin{scope}[shift={(2,2)}]

                    \draw[line width=1pt] (0,0) circle (\radius);
                    
                    \pgfmathsetmacro\angle{360/\numPoints}
                    
                    \foreach \i in {1,...,\the\numexpr\numPoints-1\relax} {
                        \coordinate (P\i) at (-1*\angle*\i+180:\radius);
                    }
                    \coordinate (P0) at (180:\radius);
                    
                    
                    \draw[orange, line width=1.1pt] (P0) arc (270.0:360.0:1.0*\radius); 
                    \draw[orange, line width=1.1pt] (P0) arc (90.0:-7.105427357601002e-15:1.0*\radius); 
                    \draw[orange, line width=1.1pt, -] (P1) -- (P3); 
                    \draw[orange, line width=1.1pt] (P2) arc (90.0:180.0:1.0*\radius); 
                            
                    \fill (P1) circle (2pt) node[label=-1*\angle*1+180:$c$] {};
                    \fill (P2) circle (2pt) node[label=-1*\angle*2+180:$b$] {};
                    \fill (P3) circle (2pt) node[label=-1*\angle*3+180:$d$] {};
                    \fill (P0) circle (2pt) node[label=180:$a$] {};
        
                \end{scope}

                \begin{scope}[shift={(5,2)}]

                    \draw[line width=1pt] (0,0) circle (\radius);
                    
                    \pgfmathsetmacro\angle{360/\numPoints}
                    
                    \foreach \i in {1,...,\the\numexpr\numPoints-1\relax} {
                        \coordinate (P\i) at (-1*\angle*\i+180:\radius);
                    }
                    \coordinate (P0) at (180:\radius);
                    
                    
                    \draw[orange, line width=1.1pt] (P0) arc (270.0:360.0:1.0*\radius); 
                    \draw[orange, line width=1.1pt, -] (P0) -- (P2); 
                    \draw[orange, line width=1.1pt] (P1) arc (180.0:270.0:1.0*\radius); 
                    \draw[orange, line width=1.1pt] (P2) arc (90.0:180.0:1.0*\radius); 
                            
                    \fill (P1) circle (2pt) node[label=-1*\angle*1+180:$c$] {};
                    \fill (P2) circle (2pt) node[label=-1*\angle*2+180:$d$] {};
                    \fill (P3) circle (2pt) node[label=-1*\angle*3+180:$b$] {};
                    \fill (P0) circle (2pt) node[label=180:$a$] {};
        
                \end{scope}
 
                \begin{scope}[shift={(8,2)}]

                    \draw[line width=1pt] (0,0) circle (\radius);
                    
                    \pgfmathsetmacro\angle{360/\numPoints}
                    
                    \foreach \i in {1,...,\the\numexpr\numPoints-1\relax} {
                        \coordinate (P\i) at (-1*\angle*\i+180:\radius);
                    }
                    \coordinate (P0) at (180:\radius);
                    
                    
                    \draw[orange, line width=1.1pt, -] (P0) -- (P2); 
                    \draw[orange, line width=1.1pt] (P0) arc (90.0:-7.105427357601002e-15:1.0*\radius); 
                    \draw[orange, line width=1.1pt] (P1) arc (180.0:270.0:1.0*\radius); 
                    \draw[orange, line width=1.1pt] (P2) arc (90.0:180.0:1.0*\radius); 
                            
                    \fill (P1) circle (2pt) node[label=-1*\angle*1+180:$b$] {};
                    \fill (P2) circle (2pt) node[label=-1*\angle*2+180:$d$] {};
                    \fill (P3) circle (2pt) node[label=-1*\angle*3+180:$c$] {};
                    \fill (P0) circle (2pt) node[label=180:$a$] {};
        
                \end{scope}
 
                \begin{scope}[shift={(11,2)}]

                    \draw[line width=1pt] (0,0) circle (\radius);
                    
                    \pgfmathsetmacro\angle{360/\numPoints}
                    
                    \foreach \i in {1,...,\the\numexpr\numPoints-1\relax} {
                        \coordinate (P\i) at (-1*\angle*\i+180:\radius);
                    }
                    \coordinate (P0) at (180:\radius);
                    
                    
                    \draw[orange, line width=1.1pt, -] (P0) -- (P2); 
                    \draw[orange, line width=1.1pt] (P0) arc (90.0:-7.105427357601002e-15:1.0*\radius); 
                    \draw[orange, line width=1.1pt] (P1) arc (180.0:270.0:1.0*\radius); 
                    \draw[orange, line width=1.1pt] (P2) arc (90.0:180.0:1.0*\radius); 
                            
                    \fill (P1) circle (2pt) node[label=-1*\angle*1+180:$b$] {};
                    \fill (P2) circle (2pt) node[label=-1*\angle*2+180:$c$] {};
                    \fill (P3) circle (2pt) node[label=-1*\angle*3+180:$d$] {};
                    \fill (P0) circle (2pt) node[label=180:$a$] {};
        
                \end{scope}

                \draw[thick, ->] (0.8, 0.8) -- (1.2, 1.2)  node[midway, above left, text=blue] {$s_{b,c}$};
                \draw[thick, ->] (0.8, -0.8) -- (1.2, -1.2)  node[midway, below left, text=blue] {$s_{b,c}$};

                \draw[thick, ->] (2.9, 2.5) -- (4.1, 2.5)  node[midway, above, text=blue] {$s_{b,d}$};
                \draw[thick, ->] (2.9, -2.5) -- (4.1, -2.5)  node[midway, below, text=blue] {$s_{b,d}$};

                \draw[thick, ->] (5.9, 2.5) -- (7.1, 2.5)  node[midway, above, text=blue] {$s_{b,c}$};
                \draw[thick, ->] (5.9, -2.5) -- (7.1, -2.5)  node[midway, below, text=blue] {$s_{b,c}$};

                \draw[thick, ->] (8.9, 2.5) -- (10.1, 2.5)  node[midway, above, text=blue] {$s_{c,d}$};
                \draw[thick, ->] (8.9, -2.5) -- (10.1, -2.5)  node[midway, below, text=blue] {$s_{c,d}$};

                \draw (-0.5, 3) node[text=red] {$c+d > a + b$};
                \draw (-0.5, -3) node[text=red] {$c+d \leq a + b$};

                
            \end{tikzpicture}
            }
        \end{center}
        \caption{An example of one of the cases in Theorem~\ref{thm:n=3}. The arcs are unlabeled but they are uniquely determined based on Lemma~\ref{lemma:unique_given_hole_n=3}.}\label{fig:n=3_proof_example}
    \end{figure}
\end{proof}

\section{Transitivity When \texorpdfstring{$l_i = 1$}{li=1}}\label{sec:transitivity1}

We consider the actions of $J_n$ on a set $X(l_1, l_2, \dots, l_n, l_\infty)$ when there exists some $l_i = 1$ (or $l_\infty = 1$).

First, note that if we wish to perform an operation that reverses points $q$ through $p$, where $q > p$ (that is, the interval includes $z_\infty$), we instead apply $s_{p+1,q-1}$, achieving the same result (unless $q = p + 1$, in which case reversing points $q$ through $p$ does not do anything). Thus, we can treat $s_\infty$ as we would any other point on the circle. 

Also, we will use $z_k$ to mean either the point $z_k$ or the index of the point $z_k$ when counting from $z_\infty$, depending on the context. In particular, this allows us to use $s_{z_k, z_j}$, where $z_k$ and $z_j$ are points, to mean reversing the points between $z_k$ and $z_j$, inclusive. We will also use $y_k$ to mean the point at index $k$, regardless of how the $z_j$'s are arranged.

Moving forward, we define $i=$ such that $l_i = 1$. Let us define the \emph{adjacency operation}, which brings the point $z_i$ to be adjacent to the point to which it is connected while keeping the rest of the diagram unchanged. Let the initial index of $z_i$ be $j$ and the index of the point to which it is connected be $k$. If $j < k$ then the action $s_{j, k-2} s_{j, k-1}$ does this and if $j > k$ then the action $s_{k+1, j} s_{k+2, j}$ does this. 

Next, we define the \emph{fissure operation}. We assume that $z_i$ is connected to a point immediately adjacent to it. We define the fissure operation $f_{j, k}$, where $j$ is the index of $z_i$ and $k$ is any other index, as $f^-_{j, k} = s_{j, k-1} s_{j, k} $ if $z_i$ is connected to the point adjacent to it in the negative direction and $f^+_{j,k} = s_{j, k} s_{j + 1, k}$ otherwise. 

Note that we can change which side $z_i$ is connected on by doing the action of the cactus group that switches it with the point it is connected to without affecting any other connections.

Graphically, we can interpret this operation as creating a ``fissure'' connecting the space between $z_i$ and the point to which it is initially connected and the space on the positive side of $y_k$ and shifting all connecting lines intersected by this fissure on the side containing $z_i$ toward $z_i$ by 1. The point $z_i$ ends up at index $k$, connected to the last connecting line intersected by the fissure. For example, see Figure~\ref{fig:fissure_example}. 

Note that the order of the points other than $z_i$ remains unchanged and all connections not intersected by the fissure remain unchanged. 

\begin{figure}
    \begin{center}
        \scalebox{0.8}{
        \begin{tikzpicture}
            \def\radius{2.3cm}
            \pgfmathtruncatemacro\numPoints{7} 
            \begin{scope}

            \draw[line width=1pt] (0,0) circle (\radius);
            
            \pgfmathsetmacro\angle{360/\numPoints}
            
            \foreach \i in {1,...,\the\numexpr\numPoints-1\relax} {
                \coordinate (P\i) at (-1*\angle*\i+180:\radius);
            }
            \coordinate (P0) at (180:\radius);
            
            
            \draw[orange, line width=1.1pt] (P0) arc (308.57142857142856:360.0:1.0*\radius); 
            \draw[orange, line width=1.1pt] (P0) arc (285.55993707149213:331.582920071365:2.0*\radius); 
            \draw[orange, line width=1.1pt] (P0) arc (277.157239903014:339.9856172398431:1.5*\radius); 
            \draw[orange, line width=1.1pt] (P0) arc (263.8930236676988:301.821262046587:3.0*\radius); 
            \draw[orange, line width=1.1pt] (P2) arc (205.71428571428572:257.14285714285717:1.0*\radius); 
            \draw[orange, line width=1.1pt] (P3) arc (154.28571428571428:205.71428571428572:1.0*\radius); 
            \draw[orange, line width=1.1pt] (P3) arc (144.65407187131078:215.34592812868922:0.75*\radius); 
            \draw[orange, line width=1.1pt] (P4) arc (102.85714285714283:154.28571428571428:1.0*\radius); 
            \draw[orange, line width=1.1pt] (P4) arc (79.84565135720644:125.86863435707929:2.0*\radius); 

            \draw[red, dashed, line width=0.5pt] (180-0.6*\angle:\radius) arc (219.99743913340671:242.85970372373612:5*\radius); 
                        
            \foreach \i in {1,...,\the\numexpr\numPoints-1\relax} {
                \fill (P\i) circle (2pt);
            }
            \fill (P1) circle (2pt) node[label=-1*\angle*1+180:$z_i$] {};
            \fill (P2) circle (2pt) node[label=-1*\angle*2+180:$z_1$] {};
            \fill (P3) circle (2pt) node[label=-1*\angle*3+180:$z_2$] {};
            \fill (P4) circle (2pt) node[label=-1*\angle*4+180:$z_3$] {};
            \fill (P5) circle (2pt) node[label=-1*\angle*5+180:$z_4$] {};
            \fill (P6) circle (2pt) node[label=-1*\angle*6+180:$z_5$] {};
            \fill (P0) circle (2pt) node[label=180:$z_\infty$] {};

            \end{scope}

            \begin{scope}[shift={(7.5,0)}]
            
            
            \draw[line width=1pt] (0,0) circle (\radius);
            
            \pgfmathsetmacro\angle{360/\numPoints}
            
            \foreach \i in {1,...,\the\numexpr\numPoints-1\relax} {
                \coordinate (P\i) at (-1*\angle*\i+180:\radius);
            }
            \coordinate (P0) at (180:\radius);
            
            
            \draw[orange, line width=1.1pt] (P0) arc (308.57142857142856:360.0:1.0*\radius); 
            \draw[orange, line width=1.1pt] (P0) arc (298.9397861570251:369.6316424144035:0.75*\radius); 
            \draw[orange, line width=1.1pt] (P0) arc (285.55993707149213:331.582920071365:2.0*\radius); 
            \draw[orange, line width=1.1pt] (P0) arc (263.8930236676988:301.821262046587:3.0*\radius); 
            \draw[orange, line width=1.1pt] (P1) arc (257.1428571428571:308.57142857142856:1.0*\radius); 
            \draw[orange, line width=1.1pt] (P2) arc (205.71428571428572:257.14285714285717:1.0*\radius); 
            \draw[orange, line width=1.1pt] (P2) arc (196.08264329988222:266.77449955726064:0.75*\radius); 
            \draw[orange, line width=1.1pt] (P3) -- (P6); 
            \draw[orange, line width=1.1pt] (P4) arc (102.85714285714283:154.28571428571428:1.0*\radius); 
            
            \draw[red, dashed, line width=0.5pt] (180-0.6*\angle:\radius) arc (219.99743913340671:242.85970372373612:5*\radius); 
            
            \fill (P4) circle (2pt) node[label=-4*\angle*1+180:$z_i$] {};
            \fill (P1) circle (2pt) node[label=-1*\angle*1+180:$z_1$] {};
            \fill (P2) circle (2pt) node[label=-1*\angle*2+180:$z_2$] {};
            \fill (P3) circle (2pt) node[label=-1*\angle*3+180:$z_3$] {};
            \fill (P5) circle (2pt) node[label=-1*\angle*5+180:$z_4$] {};
            \fill (P6) circle (2pt) node[label=-1*\angle*6+180:$z_5$] {};
            \fill (P0) circle (2pt) node[label=180:$z_\infty$] {};

            \end{scope}

            \draw[thick, ->] (2.7, 0) -- (4.2, 0)  node[midway, above, text=blue] {$s_{1,3} s_{1,4}$};

            \begin{scope}[shift={(0, -6)}]

                \draw[line width=1pt] (0,0) circle (\radius);
                
                \pgfmathsetmacro\angle{360/\numPoints}
                
                \foreach \i in {1,...,\the\numexpr\numPoints-1\relax} {
                    \coordinate (P\i) at (-1*\angle*\i+180:\radius);
                }
                \coordinate (P0) at (180:\radius);
                
                
                \draw[orange, line width=1.1pt] (P0) arc (285.55993707149213:331.582920071365:2.0*\radius); 
                \draw[orange, line width=1.1pt] (P0) arc (277.157239903014:339.9856172398431:1.5*\radius); 
                \draw[orange, line width=1.1pt] (P0) arc (263.8930236676988:301.821262046587:3.0*\radius); 
                \draw[orange, line width=1.1pt] (P1) arc (257.1428571428571:308.57142857142856:1.0*\radius); 
                \draw[orange, line width=1.1pt] (P2) arc (205.71428571428572:257.14285714285717:1.0*\radius); 
                \draw[orange, line width=1.1pt] (P3) arc (154.28571428571428:205.71428571428572:1.0*\radius); 
                \draw[orange, line width=1.1pt] (P3) arc (144.65407187131078:215.34592812868922:0.75*\radius); 
                \draw[orange, line width=1.1pt] (P4) arc (102.85714285714283:154.28571428571428:1.0*\radius); 
                \draw[orange, line width=1.1pt] (P4) arc (79.84565135720644:125.86863435707929:2.0*\radius); 
    
                \draw[red, dashed, line width=0.5pt] (180-1.6*\angle:\radius) arc (168.56886770483533:191.43113229516467:5*\radius); 

                \fill (P1) circle (2pt) node[text=red, label=-1*\angle*1+180:$z_i$] {};
                \fill (P2) circle (2pt) node[label=-1*\angle*2+180:$z_1$] {};
                \fill (P3) circle (2pt) node[label=-1*\angle*3+180:$z_2$] {};
                \fill (P4) circle (2pt) node[label=-1*\angle*4+180:$z_3$] {};
                \fill (P5) circle (2pt) node[label=-1*\angle*5+180:$z_4$] {};
                \fill (P6) circle (2pt) node[label=-1*\angle*6+180:$z_5$] {};

                \fill (P0) circle (2pt) node[label=180:$z_\infty$] {};
    
                \end{scope}
    
                \begin{scope}[shift={(7.5,-6)}]
                
                
                \draw[line width=1pt] (0,0) circle (\radius);
                
                \pgfmathsetmacro\angle{360/\numPoints}
                
                \foreach \i in {1,...,\the\numexpr\numPoints-1\relax} {
                    \coordinate (P\i) at (-1*\angle*\i+180:\radius);
                }
                \coordinate (P0) at (180:\radius);
                
                
                \draw[orange, line width=1.1pt] (P0) arc (308.57142857142856:360.0:1.0*\radius); 
                \draw[orange, line width=1.1pt] (P0) arc (298.9397861570251:369.6316424144035:0.75*\radius); 
                \draw[orange, line width=1.1pt] (P0) arc (292.1273073538059:376.44412121762264:0.6464466094067263*\radius); 
                \draw[orange, line width=1.1pt] (P1) arc (257.1428571428571:308.57142857142856:1.0*\radius); 
                \draw[orange, line width=1.1pt] (P2) arc (205.71428571428572:257.14285714285717:1.0*\radius); 
                \draw[orange, line width=1.1pt] (P2) arc (196.08264329988222:266.77449955726064:0.75*\radius); 
                \draw[orange, line width=1.1pt] (P2) arc (-6.750166524841621:-44.6784049037298:3.0*\radius); 
                \draw[orange, line width=1.1pt] (P3) arc (154.28571428571428:205.71428571428572:1.0*\radius); 
                \draw[orange, line width=1.1pt] (P3) arc (131.27422278577785:177.2972057856507:2.0*\radius); 
                
                \draw[red, dashed, line width=0.5pt] (180-0.6*\angle:\radius) arc (219.99743913340671:242.85970372373612:5*\radius); 

                \fill (P5) circle (2pt) node[label=-5*\angle*1+180:$z_i$] {};
                \fill (P1) circle (2pt) node[label=-1*\angle*1+180:$z_1$] {};
                \fill (P2) circle (2pt) node[label=-1*\angle*2+180:$z_2$] {};
                \fill (P3) circle (2pt) node[label=-1*\angle*3+180:$z_3$] {};
                \fill (P4) circle (2pt) node[label=-1*\angle*4+180:$z_4$] {};
                \fill (P6) circle (2pt) node[label=-1*\angle*6+180:$z_5$] {};
                \fill (P0) circle (2pt) node[label=180:$z_\infty$] {};

                \end{scope}
    
                \draw[thick, ->] (2.7, -6) -- (4.2, -6)  node[midway, above, text=blue] {$s_{1,5} s_{2,5}$};

            
        \end{tikzpicture}
        }
    \end{center}
    \caption{An example of the fissure operations $f^-_{1, 4}$ and $f^+_{1,5}$.}\label{fig:fissure_example}
\end{figure}
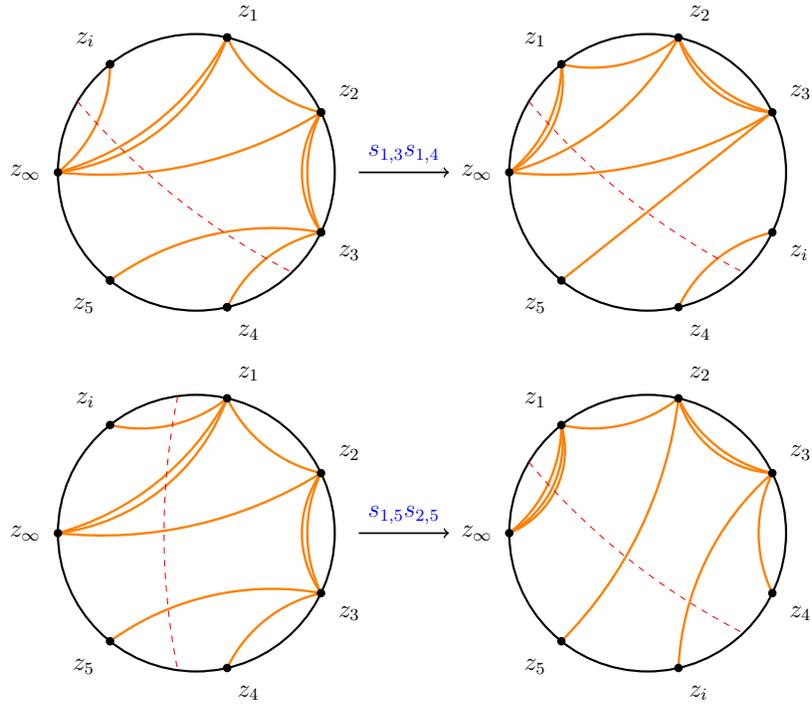

The following two lemmas allow us to completely describe the $J_n$ orbits in the set $X(l_1, l_2, \dots, l_n, l_\infty)$ where $l_i = 1$. 

\begin{lemma}\label{lemma:remove_connection}
    Consider an initial arc diagram $x \in X(l_1, l_2,\dots,l_n, l_\infty)$ with $n \geq 3$. Let us select an arbitrary pair of neighboring points $z_k < z_m$, neither of which is connected to $z_i$. 
    
    Assume there exists $y \in X(l_1, l_2,\dots,l_n, l_\infty)$ where the number of connections between $z_k$ and $z_m$ is smaller than in $x$ and the points $z_k$ and $z_m$ are not connected to $z_i$. Then there exists $j$ in the cactus group such that $jx$ has one less connection between $z_k$ and $z_m$ and neither point is connected to $z_i$. Furthermore, the order of the points other than $z_i$ in $jx$ is the same as in $x$. 
\end{lemma}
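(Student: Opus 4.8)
The plan is to realize the required element $j$ as a product of the adjacency and fissure operations defined above, using the valence-one point $z_i$ as a catalyst that shuttles a single half-edge through the diagram. The key fact I would establish first is that a fissure crossing exactly one arc $\alpha=(u-v)$ performs an elementary reconnection: if $z_i$ is attached to $P$ before the fissure, then afterward $z_i$ is attached to one endpoint of $\alpha$ and $P$ to the other, while every other arc is unchanged. By first applying an adjacency operation to bring $z_i$ next to its partner $P$, and then choosing the fissure target index $k$ so that the cut separates $z_i$'s gap from the bulk while crossing only the arc I want, I can make $z_i$ reconnect with any prescribed arc reachable in this way; inserting intermediate adjacency operations lets me reach an arbitrary arc of the diagram.

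Granting this, I would lower the $z_k$--$z_m$ count, call it $c$, in two reconnection steps. Let $\gamma$ be an outermost $z_k$--$z_m$ arc, i.e. one that together with the boundary segment joining the adjacent points $z_k$ and $z_m$ bounds a region meeting no other arc. In the first step I route a fissure so that $z_i$ reconnects with $\gamma$; since both endpoints of $\gamma$ lie in $\{z_k,z_m\}$ and $P\notin\{z_k,z_m\}$, this replaces $\gamma$ by the two arcs $z_i$--$z_k$ and $P$--$z_m$, lowering $c$ by exactly one but leaving $z_i$ attached to $z_k$ (or $z_m$). In the second step I route a fissure so that $z_i$ reconnects with some arc $\delta=(a-b)$ whose endpoints avoid $z_m$ and with $a\notin\{z_k,z_m\}$; this detaches $z_i$ from $\{z_k,z_m\}$, gives it the admissible partner $a$, and leaves the $z_k$--$z_m$ bundle untouched. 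Tracking valences shows each point keeps its valence, so the result lies in $X(l_1,\dots,l_n,l_\infty)$, has $c-1$ connections between $z_k$ and $z_m$, and joins neither $z_k$ nor $z_m$ to $z_i$.

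This is exactly where the hypothesis on $y$ enters: I must know that a suitable arc $\delta$ with an endpoint outside $\{z_k,z_m\}$ is available after the first step, and more fundamentally that a reduction is combinatorially possible at all. The existence of $y$, a diagram on the same valences with strictly fewer $z_k$--$z_m$ connections and with $z_k,z_m$ not meeting $z_i$, certifies that $c$ is not already minimal, so the half-edges freed from $\gamma$ can be rerouted into the bulk; comparing the connections of $x$ and $y$ locates a third point $z_p\notin\{z_i,z_k,z_m\}$ toward which the reroute can be aimed. The statement about the order of the remaining points then needs no extra work: each adjacency or fissure operation merely slides $z_i$ across a contiguous block of points without permuting them among themselves, so any composition of such operations preserves the cyclic order of all points other than $z_i$.

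The main obstacle I anticipate is the bookkeeping needed to guarantee that each fissure crosses precisely the intended single arc and no others. This forces a case analysis according to the position of $z_k$ and $z_m$ relative to $z_i$ around the circle and according to whether $z_i$'s partner lies in the positive or negative direction (the cases $f^+$ and $f^-$, and $j<k$ versus $j>k$ for the adjacency operation). I would also treat the degenerate low-valence situations separately — for instance when $z_m$ has valence one so that $\gamma$ is its only arc, or when $z_k$ or $z_m$ is adjacent to $z_i$ — checking in each that an intermediate adjacency operation still positions $z_i$ so that the desired arc is the unique one the fissure crosses. Planarity is not a concern, since the cactus action is defined so as to always return a non-crossing diagram; the only thing to confirm is the combinatorial identity of the reconnection it produces, which the one-arc fissure description supplies.
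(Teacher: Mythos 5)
Your high-level strategy coincides with the paper's: aim a fissure at the gap between $z_k$ and $z_m$ to destroy one arc of the bundle (temporarily attaching $z_i$ to $z_m$), then use a second fissure to relocate $z_i$'s single connection, with the existence of $y$ guaranteeing the second step is possible. But the load-bearing ingredient of your write-up --- the ``elementary reconnection'' in which a fissure crosses exactly one arc and leaves every other arc unchanged --- does not match the operation as defined, and cannot in general be arranged. A fissure is determined by its two endpoint gaps and must cross \emph{every} arc separating them; all of those arcs are reconnected in a chain, not just the last one. In particular, to end a fissure in the gap between the adjacent points $z_k$ and $z_m$ you necessarily cross the entire $z_k$--$z_m$ bundle together with whatever separates $z_i$ from it, and to reach a nested arc $\delta$ you must cross everything surrounding it. Your proposed remedy --- chaining single-arc moves via intermediate adjacency operations --- is exactly the hard part and is left unproved: each intermediate move changes the diagram, and nothing shows that the chain terminates at the intended arc or that the intermediate reconnections never create or destroy $z_k$--$z_m$ arcs along the way. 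The paper sidesteps all of this by working directly with the multi-arc fissure and only ever aiming it at gaps adjacent to explicitly named points.

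The second gap is in your step two. After step one $z_i$ is attached to $z_m$, so any move that detaches $z_i$ hands $z_m$ a new partner, and if that partner is $z_k$ the count goes straight back up. Your conditions on $\delta$ (endpoints avoid $z_m$, and the endpoint $a$ given to $z_i$ satisfies $a\notin\{z_k,z_m\}$) do not exclude the other endpoint being $z_k$, so they do not guarantee the bundle is ``untouched.'' More importantly, the existence of a usable target is precisely what needs proof, and ``comparing the connections of $x$ and $y$ locates a third point'' is an assertion rather than an argument. The paper supplies the missing content: it searches outward from $z_i$ for the nearest point satisfying one of three explicit conditions, and when the search fails in both directions it concludes that every remaining point is connected only to $z_k$ and $z_m$, forcing $\sum_{j\neq k,m} l_j = l_k + l_m - 2p$; this determines $p$ and contradicts the existence of a $y$ with a smaller count. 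Some version of that termination and counting argument is indispensable --- without it the hypothesis on $y$ is never actually used.
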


\begin{proof}
    We begin by performing $f^-_{z_i, z_k}$, creating an arc diagram with one less connecting line between $z_k$ and $z_m$. Now, we must find a way to disconnect $z_i$ from $z_m$ without adding a connecting line between $z_k$ and $z_m$. 
    
    First, we apply $s_{z_i, z_m}$. Next, find the closest point $y_{z_i + q}$ for positive $q$, that is the closest point in the positive direction from $z_i$, such that it satisfies one of three conditions: it is not connected to $z_k$ or $z_m$, it is connected to $z_m$ and some other point that is not $z_k$, or it is connected only to $z_m$ and $z_k$. If one of the first two cases is satisfied, apply $f^-_{z_i, z_i + q}$ and we are done. See Figure~\ref{fig:transitivity_example_remove} for an example of this case.

    \begin{figure}
        \begin{center}
            \begin{tikzpicture}
                \def\radius{1cm}
                \pgfmathtruncatemacro\numPoints{7} 
                \begin{scope}[shift={(0,0)}]

                \draw[line width=1pt] (0,0) circle (\radius);
                
                \pgfmathsetmacro\angle{360/\numPoints}
                
                \foreach \i in {1,...,\the\numexpr\numPoints-1\relax} {
                    \coordinate (P\i) at (-1*\angle*\i+180:\radius);
                }
                \coordinate (P0) at (180:\radius);
                
                
                \draw[orange, line width=1.1pt] (P0) arc (308.57142857142856:360.0:1.0*\radius); 
                \draw[orange, line width=1.1pt] (P0) arc (263.8930236676988:301.821262046587:3.0*\radius); 
                \draw[orange, line width=1.1pt] (P0) -- (P3); 
                \draw[orange, line width=1.1pt] (P0) arc (51.428571428571445:0.0:1.0*\radius); 
                \draw[orange, line width=1.1pt] (P1) arc (257.1428571428571:308.57142857142856:1.0*\radius); 
                \draw[orange, line width=1.1pt] (P3) -- (P6); 
                \draw[orange, line width=1.1pt] (P4) arc (102.85714285714283:154.28571428571428:1.0*\radius); 
                \draw[orange, line width=1.1pt] (P4) arc (86.69598842164707:170.44686872121002:0.65*\radius); 
                \draw[orange, line width=1.1pt] (P4) arc (69.35208043935259:187.79077670350452:0.5050252531694167*\radius); 
                \draw[orange, line width=1.1pt] (P5) arc (51.42857142857142:102.85714285714286:1.0*\radius); 

                \draw[red, dashed, line width=0.5pt] (180-1.6*\angle:\radius) arc (194.749169845637:216.6794015829344:5*\radius); 


                \foreach \i in {0,...,\the\numexpr\numPoints-1\relax} {
                    \fill (P\i) circle (2pt);
                }
                
                \fill (P2) circle (2pt) node[label=-1*\angle*2+180:$z_i$] {};
                \fill (P4) circle (2pt) node[label=-1*\angle*4+180:$z_k$] {};
                \fill (P5) circle (2pt) node[label=-1*\angle*5+180:$z_m$] {};
    
                \end{scope}

                \begin{scope}[shift={(3.3,0)}]

                    \draw[line width=1pt] (0,0) circle (\radius);
                    
                    \pgfmathsetmacro\angle{360/\numPoints}
                    
                    \foreach \i in {1,...,\the\numexpr\numPoints-1\relax} {
                        \coordinate (P\i) at (-1*\angle*\i+180:\radius);
                    }
                    \coordinate (P0) at (180:\radius);
                    
                    
                    \draw[orange, line width=1.1pt] (P0) arc (308.57142857142856:360.0:1.0*\radius); 
\draw[orange, line width=1.1pt] (P0) arc (285.55993707149213:331.582920071365:2.0*\radius); 
\draw[orange, line width=1.1pt] (P0) arc (271.60056394422736:345.54229319862975:1.3*\radius); 
\draw[orange, line width=1.1pt] (P0) arc (51.428571428571445:0.0:1.0*\radius); 
\draw[orange, line width=1.1pt] (P1) arc (257.1428571428571:308.57142857142856:1.0*\radius); 
\draw[orange, line width=1.1pt] (P3) arc (131.27422278577785:177.2972057856507:2.0*\radius); 
\draw[orange, line width=1.1pt] (P3) arc (117.31484965851308:191.25657891291547:1.3*\radius); 
\draw[orange, line width=1.1pt] (P3) arc (109.60730938198449:147.53554776087267:3.0*\radius); 
\draw[orange, line width=1.1pt] (P4) arc (102.85714285714283:154.28571428571428:1.0*\radius); 
\draw[orange, line width=1.1pt] (P5) arc (51.42857142857142:102.85714285714286:1.0*\radius); 

\draw[red, dashed, line width=0.5pt] (180-3.6*\angle:\radius) arc (107.38538664446662:149.7574704983905:2*\radius); 


                    \foreach \i in {0,...,\the\numexpr\numPoints-1\relax} {
                        \fill (P\i) circle (2pt);
                    }
                    \fill (P3) circle (2pt) node[label=-1*\angle*3+180:$z_k$] {};
                    \fill (P4) circle (2pt) node[label=-1*\angle*4+180:$z_i$] {};
                    \fill (P5) circle (2pt) node[label=-1*\angle*5+180:$z_m$] {};
        
                \end{scope}

                \begin{scope}[shift={(6.6,0)}]

                    \draw[line width=1pt] (0,0) circle (\radius);
                    
                    \pgfmathsetmacro\angle{360/\numPoints}
                    
                    \foreach \i in {1,...,\the\numexpr\numPoints-1\relax} {
                        \coordinate (P\i) at (-1*\angle*\i+180:\radius);
                    }
                    \coordinate (P0) at (180:\radius);
                    
                    
                    \draw[orange, line width=1.1pt] (P0) arc (308.57142857142856:360.0:1.0*\radius); 
\draw[orange, line width=1.1pt] (P0) arc (285.55993707149213:331.582920071365:2.0*\radius); 
\draw[orange, line width=1.1pt] (P0) arc (271.60056394422736:345.54229319862975:1.3*\radius); 
\draw[orange, line width=1.1pt] (P0) arc (51.428571428571445:0.0:1.0*\radius); 
\draw[orange, line width=1.1pt] (P1) arc (257.1428571428571:308.57142857142856:1.0*\radius); 
\draw[orange, line width=1.1pt] (P3) arc (154.28571428571428:205.71428571428572:1.0*\radius); 
\draw[orange, line width=1.1pt] (P3) arc (138.12455985021853:221.87544014978147:0.65*\radius); 
\draw[orange, line width=1.1pt] (P3) arc (109.60730938198449:147.53554776087267:3.0*\radius); 
\draw[orange, line width=1.1pt] (P4) arc (102.85714285714283:154.28571428571428:1.0*\radius); 
\draw[orange, line width=1.1pt] (P4) arc (79.84565135720644:125.86863435707929:2.0*\radius); 

\draw[red, dashed, line width=0.5pt] (180-4.6*\angle:\radius) arc (55.95681521589521:98.3288990698191:2*\radius); 
        
                    \foreach \i in {0,...,\the\numexpr\numPoints-1\relax} {
                        \fill (P\i) circle (2pt);
                    }
                    \fill (P3) circle (2pt) node[label=-1*\angle*3+180:$z_k$] {};
                    \fill (P4) circle (2pt) node[label=-1*\angle*4+180:$z_m$] {};
                    \fill (P5) circle (2pt) node[label=-1*\angle*5+180:$z_i$] {};
        
                \end{scope}

                \begin{scope}[shift={(9.9,0)}]

                    \draw[line width=1pt] (0,0) circle (\radius);
                    
                    \pgfmathsetmacro\angle{360/\numPoints}
                    
                    \foreach \i in {1,...,\the\numexpr\numPoints-1\relax} {
                        \coordinate (P\i) at (-1*\angle*\i+180:\radius);
                    }
                    \coordinate (P0) at (180:\radius);
                    
                    
                    \draw[orange, line width=1.1pt] (P0) arc (308.57142857142856:360.0:1.0*\radius); 
\draw[orange, line width=1.1pt] (P0) arc (285.55993707149213:331.582920071365:2.0*\radius); 
\draw[orange, line width=1.1pt] (P0) arc (271.60056394422736:345.54229319862975:1.3*\radius); 
\draw[orange, line width=1.1pt] (P0) arc (51.428571428571445:0.0:1.0*\radius); 
\draw[orange, line width=1.1pt] (P1) arc (257.1428571428571:308.57142857142856:1.0*\radius); 
\draw[orange, line width=1.1pt] (P3) arc (154.28571428571428:205.71428571428572:1.0*\radius); 
\draw[orange, line width=1.1pt] (P3) arc (138.12455985021853:221.87544014978147:0.65*\radius); 
\draw[orange, line width=1.1pt] (P3) arc (131.27422278577785:177.2972057856507:2.0*\radius); 
\draw[orange, line width=1.1pt] (P4) arc (102.85714285714283:154.28571428571428:1.0*\radius); 
\draw[orange, line width=1.1pt] (P4) arc (86.69598842164707:170.44686872121002:0.65*\radius); 
                            
                    
                    \foreach \i in {0,...,\the\numexpr\numPoints-1\relax} {
                        \fill (P\i) circle (2pt);
                    }
                    
                    \fill (P3) circle (2pt) node[label=-1*\angle*3+180:$z_k$] {};
                    \fill (P4) circle (2pt) node[label=-1*\angle*4+180:$z_m$] {};
                    \fill (P6) circle (2pt) node[label=-1*\angle*6+180:$z_i$] {};
        
                \end{scope}

                \draw[thick, ->] (1.2, 0) -- (2, 0)  node[midway, above, text=blue] {$f^-$};

                \draw[thick, ->] (1.2+3.3, 0) -- (2+3.3, 0)  node[midway, above, text=blue] {$s$};

                \draw[thick, ->] (1.2+3.3*2, 0) -- (2+3.3*2, 0)  node[midway, above, text=blue] {$f^-$};


            \end{tikzpicture}
        \end{center}
        \caption{A simple example of actions required to remove a connection.}\label{fig:transitivity_example_remove}
    \end{figure}
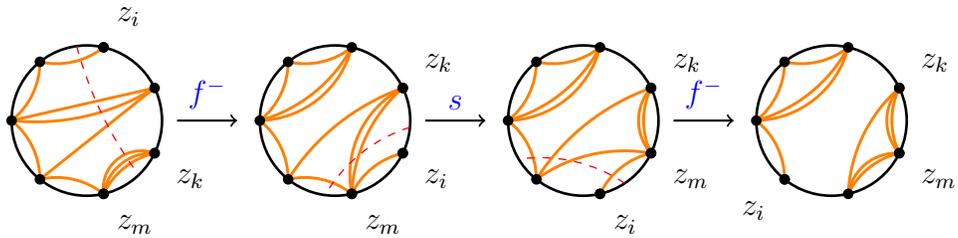

    If the last case is satisfied, apply $f^-_{z_i, z_k}$ and repeat the previous step except for points $y_{z_i - q}$ and applying $f^+_{z_i, z_i - q - 1}$. If we still end with the last case, all connections are either to point $z_k$ or $z_m$. Let us define $p$ to be the current number of connections between $z_k$ and $z_m$. Then there is no arc diagram satisfying the assumptions for the given $p$ since $z_i$ will always be connected to $z_k$ or $z_m$. 
    Furthermore $\sum_{l_j \mid j \neq k, j\neq m} l_j = l_k + l_m - 2p$, so a diagram with smaller $p$ cannot exist either. 
\end{proof}

\begin{lemma}\label{lemma:add_connection}
    Consider an initial arc diagram $x \in X(l_1, l_2,\dots,l_n, l_\infty)$ with $n \geq 3$. Let us select an arbitrary pair of neighboring points $z_k < z_m$, neither of which is connected to $z_i$. 
    
    Assume there exists $y \in X(l_1, l_2,\dots,l_n, l_\infty)$ where the number of connections between $z_k$ and $z_m$ is larger than in $x$ and the points $z_k$ and $z_m$ are not connected to $z_i$. Then there exists $j$ in the cactus group such that $jx$ has one more connection between $z_k$ and $z_m$ and neither point is connected to $z_i$. 
    Furthermore, the order of the points other than $z_i$ in $jx$ is the same as in $x$. 
\end{lemma}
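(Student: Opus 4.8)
The plan is to mirror the construction of Lemma~\ref{lemma:remove_connection} in reverse, again using the valence-one point $z_i$ together with the fissure operations as a tool for rerouting arcs. It is tempting to simply invert the element produced by Lemma~\ref{lemma:remove_connection}, but that lemma asserts only the existence of \emph{some} $j$ decreasing the count, not that $jx$ is any prescribed diagram, so non-uniqueness blocks a formal inversion and a direct construction is needed. The governing bookkeeping is the identity $l_k + l_m = 2p + c$, where $p$ is the number of connections between $z_k$ and $z_m$ and $c$ is the number of arcs joining $\{z_k, z_m\}$ to the remaining points; since $c \le \sum_{j \ne k, m} l_j$ and the right-hand side is fixed, increasing $p$ by one amounts to decreasing $c$ by two, i.e. rerouting two outgoing arcs into a single new arc between $z_k$ and $z_m$ (together with one arc among the other points).

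First I would apply the adjacency operation to bring $z_i$ beside its current partner, and then use a fissure to route $z_i$ across an arc emanating from $z_m$ that terminates at a third point $z_a \notin \{z_k, z_m\}$, chosen as the closest such arc in the direction of $z_k$. By the definition of the fissure operation this detaches that arc's endpoint from $z_m$, attaches it to $z_i$, and shifts the intervening arcs by one; arranging the fissure to pass through the gap between $z_k$ and $z_m$ simultaneously pinches off a new $z_k$-$z_m$ arc, raising $p$ by one. As in the cleanup phase of Lemma~\ref{lemma:remove_connection}, a second fissure (searching for the nearest admissible point in the positive direction) then expels $z_i$ onto a third point, so that neither $z_k$ nor $z_m$ is connected to $z_i$ in the result. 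Throughout, the fissures leave the cyclic order of all points other than $z_i$ unchanged, and Corollary~\ref{corr:order_doesn't_matter} lets me disregard the final position of $z_i$ and any reshuffling among equal-valence points.

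The main obstacle is to guarantee that a reroutable arc -- one leaving $z_m$ (or, symmetrically, $z_k$) toward a third point -- actually exists, which is precisely the dual of the terminal case analysis at the end of Lemma~\ref{lemma:remove_connection}. If every arc incident to $z_k$ or $z_m$ ran between the two of them, then $c$ would be minimal and $p = (l_k + l_m - c)/2$ would be maximal among all diagrams with these valences, so no $y$ with a larger count between $z_k$ and $z_m$ could exist, contradicting the hypothesis. Hence a suitable third-point arc is always available, and the only remaining work is the index bookkeeping for the two fissures and the verification that the pinching step respects the non-crossing condition, both of which are routine once the target arc is fixed.
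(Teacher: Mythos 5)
Your high-level strategy coincides with the paper's: use the valence-one point $z_i$ and the fissure operations to reroute arcs, and justify the existence of reroutable arcs by the count $l_k+l_m=2p+c$ (the paper's version of this is its opening sentence: if a diagram with a larger count exists, both $z_k$ and $z_m$ must have arcs leaving to third points). The gap is in the central construction. You assert that a single fissure, ``arranged to pass through the gap between $z_k$ and $z_m$,'' simultaneously steals an arc from $z_m$ and pinches off a new $z_k$--$z_m$ arc. The fissure operation as defined cannot do this: it is determined by one target gap (the positive side of one index), and its effect is a chain rerouting in which each crossed arc cedes one endpoint to the previous one and $z_i$ absorbs the far end of the last. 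To gain a $z_k$--$z_m$ arc you must free one end of an arc leaving $z_k$ \emph{and} one end of an arc leaving $z_m$ and make those two freed ends join; a single fissure does that only if it crosses those two arcs consecutively and in the right orientation, which you neither arrange nor verify, and which can fail outright (for instance when every third-point arc at $z_m$ leaves on the side away from $z_k$). Your cleanup step exposes the confusion: after your first fissure $z_i$ is already attached to the third point $z_a$, so the stated purpose of the second fissure is moot, while the real issue --- ensuring $z_m$'s freed valence is consumed by $z_k$ rather than by $z_i$ or by whatever the rerouting chain hands it --- is never addressed. A smaller imprecision: you speak of a reroutable arc ``leaving $z_m$ (or, symmetrically, $z_k$),'' but the argument needs one outgoing arc at \emph{each} of $z_k$ and $z_m$ (which does follow from the hypothesis, since the larger count in $y$ is bounded by $\min(l_k,l_m)$).

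For comparison, the paper resolves exactly this difficulty with a longer sequence and a case split. It first parks $z_i$ on a neutral point $u$ connected to neither $z_k$ nor $z_m$; then, depending on whether the shear line of $s_{z_i,z_m}$ crosses an arc leaving $z_m$, it applies either $f^+_{z_i,z_m}$, then $s_{z_m,z_i}$, then $f^+_{z_i,r}$ with $r$ a third point connected to $z_k$, or the mirrored sequence $f^-_{z_i,z_k-1}$, $s_{z_i,z_k}$, $f^-_{z_i,r-1}$ with $r$ connected to $z_m$. The intermediate reversal is what allows the two reroutings (one at $z_k$, one at $z_m$) to be chained so that the freed ends meet in the gap between them, and the final fissure detaches $z_i$ onto a third point. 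To repair your argument you would need to exhibit such an explicit sequence and handle the two geometric cases; the one-fissure ``pinch'' is not an available move.
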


\begin{proof}
    Let the index of the lesser of the two points be $z_k$ and the greater be $z_{m}$. If an arc diagram with another connection between them exists, they must both have a nonzero number of connections going outside of the these two points. 
    
    Let $u$ be some point that is not connected to both $z_k$ and $z_m$. Such a point must exist since if it does not, $n$ must be 3, which contradicts our assumption. Now, we can perform a fissure operation to attach $z_i$ to $u$ (whether it would be an $f^+$ or $f^-$ depends on which side of the non $z_k$ or $z_m$ point to which $u$ is connected $z_i$ lies on). We then consider two cases:
    
    \begin{itemize}
    \item If $u$ is not connected to $z_m$ and the shear line of $s_{z_i, z_m}$ crosses at least one of the arcs leaving $z_m$, we then perform $f^+_{z_i, z_m}$ followed by $s_{z_m, z_i}$. Let $r$ be some point to which $z_k$ is connected that is not $z_m$. We perform $f^+_{z_i, r}$ and we are done.
    \item Otherwise, we perform $f^-_{z_i, z_k - 1}$ followed by $s_{z_i, z_k}$. Let $r$ be some point to which $z_m$ is connected which is not $z_k$. We perform $f^-_{z_i, r - 1}$ and we are done. 
    \end{itemize}

    For an example of the first case, see Figure~\ref{fig:transitivity_example_add}.

    \begin{figure}
        \begin{center}
            \begin{tikzpicture}
                \def\radius{1.4cm}
                \pgfmathtruncatemacro\numPoints{6} 
                \begin{scope}[shift={(0,0)}]

                \draw[line width=1pt] (0,0) circle (\radius);
                
                \pgfmathsetmacro\angle{360/\numPoints}
                
                \foreach \i in {1,...,\the\numexpr\numPoints-1\relax} {
                    \coordinate (P\i) at (-1*\angle*\i+180:\radius);
                }
                \coordinate (P0) at (180:\radius);
                
                
                \draw[orange, line width=1.1pt] (P0) arc (300.0:360.0:1.0*\radius); 
\draw[orange, line width=1.1pt] (P0) arc (85.65890627325528:34.34109372674472:2.0*\radius); 
\draw[orange, line width=1.1pt] (P0) arc (60.0:3.552713678800501e-15:1.0*\radius); 
\draw[orange, line width=1.1pt] (P0) arc (80.28486276817378:-20.284862768173788:0.65*\radius); 
\draw[orange, line width=1.1pt, -] (P1) -- (P4); 
\draw[orange, line width=1.1pt] (P2) arc (180.0:240.0:1.0*\radius); 
\draw[orange, line width=1.1pt] (P3) arc (120.0:180.0:1.0*\radius); 
\draw[orange, line width=1.1pt] (P4) arc (60.0:120.0:1.0*\radius); 
\draw[orange, line width=1.1pt] (P4) arc (39.71513723182621:140.28486276817378:0.65*\radius); 
\draw[orange, line width=1.1pt] (180-4*\angle:\radius) arc (80.40593177313954:99.59406822686046:3*\radius); 

\draw[red, dashed, line width=0.5pt] (180-1.6*\angle:\radius) arc (186.13966928521052:233.86033071478948:2*\radius); 


                
                \fill (P0) circle (2pt) node[label=-1*\angle*0+180:$r$] {};
                \fill (P1) circle (2pt) node[label=-1*\angle*1+180:$u$] {};
                \fill (P2) circle (2pt) node[label=-1*\angle*2+180:$z_i$] {};
                \fill (P3) circle (2pt) node[label=-1*\angle*3+180:] {};
                \fill (P4) circle (2pt) node[label=-1*\angle*4+180:$z_k$] {};
                \fill (P5) circle (2pt) node[label=-1*\angle*5+180:$z_m$] {};

                \end{scope}

                \begin{scope}[shift={(4,0)}]

                    \draw[line width=1pt] (0,0) circle (\radius);
                    
                    \pgfmathsetmacro\angle{360/\numPoints}
                    
                    \foreach \i in {1,...,\the\numexpr\numPoints-1\relax} {
                        \coordinate (P\i) at (-1*\angle*\i+180:\radius);
                    }
                    \coordinate (P0) at (180:\radius);
                    
                    
                    \draw[orange, line width=1.1pt] (P0) arc (300.0:360.0:1.0*\radius); 
\draw[orange, line width=1.1pt] (P0) arc (85.65890627325528:34.34109372674472:2.0*\radius); 
\draw[orange, line width=1.1pt] (P0) arc (60.0:3.552713678800501e-15:1.0*\radius); 
\draw[orange, line width=1.1pt] (P0) arc (80.28486276817378:-20.284862768173788:0.65*\radius); 
\draw[orange, line width=1.1pt, -] (P1) -- (P4); 
\draw[orange, line width=1.1pt] (P2) arc (180.0:240.0:1.0*\radius); 
\draw[orange, line width=1.1pt] (P4) arc (60.0:120.0:1.0*\radius); 
\draw[orange, line width=1.1pt] (P4) arc (39.71513723182621:140.28486276817378:0.65*\radius); 
\draw[orange, line width=1.1pt] (180-4*\angle:\radius) arc (80.40593177313954:99.59406822686046:3*\radius); 
\draw[orange, line width=1.1pt] (P2) arc (154.34109372674473:205.65890627325527:2.0*\radius); 

\draw[red, dashed, line width=0.5pt] (180-0.6*\angle:\radius) arc (246.13966928521052:293.8603307147895:2*\radius); 


                    \fill (P0) circle (2pt) node[label=-1*\angle*0+180:$r$] {};
                    \fill (P1) circle (2pt) node[label=-1*\angle*1+180:$u$] {};
                    \fill (P2) circle (2pt) node[label=-1*\angle*2+180:] {};
                    \fill (P3) circle (2pt) node[label=-1*\angle*3+180:$z_i$] {};
                    \fill (P4) circle (2pt) node[label=-1*\angle*4+180:$z_k$] {};
                    \fill (P5) circle (2pt) node[label=-1*\angle*5+180:$z_m$] {};

                \end{scope}

                \begin{scope}[shift={(8,0)}]

                    \draw[line width=1pt] (0,0) circle (\radius);
                    
                    \pgfmathsetmacro\angle{360/\numPoints}
                    
                    \foreach \i in {1,...,\the\numexpr\numPoints-1\relax} {
                        \coordinate (P\i) at (-1*\angle*\i+180:\radius);
                    }
                    \coordinate (P0) at (180:\radius);
                    
                    
                    \draw[orange, line width=1.1pt] (P0) arc (274.3410937267447:325.6589062732553:2.0*\radius); 
\draw[orange, line width=1.1pt] (P0) arc (85.65890627325528:34.34109372674472:2.0*\radius); 
\draw[orange, line width=1.1pt] (P0) arc (60.0:3.552713678800501e-15:1.0*\radius); 
\draw[orange, line width=1.1pt] (P0) arc (80.28486276817378:-20.284862768173788:0.65*\radius); 
\draw[orange, line width=1.1pt] (P1) arc (240.0:300.0:1.0*\radius); 
\draw[orange, line width=1.1pt] (P3) arc (120.0:180.0:1.0*\radius); 
\draw[orange, line width=1.1pt] (P3) arc (99.71513723182622:200.28486276817378:0.65*\radius); 
\draw[orange, line width=1.1pt] (P4) arc (60.0:120.0:1.0*\radius); 
\draw[orange, line width=1.1pt] (P4) arc (39.71513723182621:140.28486276817378:0.65*\radius); 
\draw[orange, line width=1.1pt] (180-4*\angle:\radius) arc (80.40593177313954:99.59406822686046:3*\radius); 
                    
\draw[red, dashed, line width=0.5pt] (180--0.4*\angle:\radius) arc (306.1396692852105:353.8603307147895:2*\radius); 

                    \fill (P0) circle (2pt) node[label=-1*\angle*0+180:$r$] {};
                    \fill (P1) circle (2pt) node[label=-1*\angle*1+180:$z_i$] {};
                    \fill (P2) circle (2pt) node[label=-1*\angle*2+180:$u$] {};
                    \fill (P3) circle (2pt) node[label=-1*\angle*3+180:] {};
                    \fill (P4) circle (2pt) node[label=-1*\angle*4+180:$z_k$] {};
                    \fill (P5) circle (2pt) node[label=-1*\angle*5+180:$z_m$] {};
        
                \end{scope}

                \begin{scope}[shift={(0,-4.5)}]

                    \draw[line width=1pt] (0,0) circle (\radius);
                    
                    \pgfmathsetmacro\angle{360/\numPoints}
                    
                    \foreach \i in {1,...,\the\numexpr\numPoints-1\relax} {
                        \coordinate (P\i) at (-1*\angle*\i+180:\radius);
                    }
                    \coordinate (P0) at (180:\radius);
                    
                    
                    \draw[orange, line width=1.1pt] (P0) arc (60.0:3.552713678800501e-15:1.0*\radius); 
                    \draw[orange, line width=1.1pt] (P1) arc (240.0:300.0:1.0*\radius); 
                    \draw[orange, line width=1.1pt] (P1) arc (219.71513723182622:320.2848627681738:0.65*\radius); 
                    \draw[orange, line width=1.1pt, -] (P1) -- (P4); 
                    \draw[orange, line width=1.1pt] (P1) arc (25.658906273255283:-25.658906273255283:2.0*\radius); 
                    \draw[orange, line width=1.1pt] (P3) arc (120.0:180.0:1.0*\radius); 
                    \draw[orange, line width=1.1pt] (P3) arc (99.71513723182622:200.28486276817378:0.65*\radius); 
                    \draw[orange, line width=1.1pt] (P4) arc (60.0:120.0:1.0*\radius); 
                    \draw[orange, line width=1.1pt] (P4) arc (39.71513723182621:140.28486276817378:0.65*\radius); 
                    \draw[orange, line width=1.1pt] (180-4*\angle:\radius) arc (80.40593177313954:99.59406822686046:3*\radius); 
                    
                    \draw[red, dashed, line width=0.5pt] (180--1.4*\angle:\radius) arc (366.1396692852105:413.8603307147895:2*\radius); 
                            
                    
                    \fill (P0) circle (2pt) node[label=-1*\angle*0+180:$z_i$] {};
                    \fill (P1) circle (2pt) node[label=-1*\angle*1+180:$r$] {};
                    \fill (P2) circle (2pt) node[label=-1*\angle*2+180:$u$] {};
                    \fill (P3) circle (2pt) node[label=-1*\angle*3+180:] {};
                    \fill (P4) circle (2pt) node[label=-1*\angle*4+180:$z_k$] {};
                    \fill (P5) circle (2pt) node[label=-1*\angle*5+180:$z_m$] {};
        
                \end{scope}

                \begin{scope}[shift={(4,-4.5)}]

                    \draw[line width=1pt] (0,0) circle (\radius);
                    
                    \pgfmathsetmacro\angle{360/\numPoints}
                    
                    \foreach \i in {1,...,\the\numexpr\numPoints-1\relax} {
                        \coordinate (P\i) at (-1*\angle*\i+180:\radius);
                    }
                    \coordinate (P0) at (180:\radius);
                    
                    
                    \draw[orange, line width=1.1pt] (P0) arc (300.0:360.0:1.0*\radius); 
                    \draw[orange, line width=1.1pt] (P0) arc (85.65890627325528:34.34109372674472:2.0*\radius); 
                    \draw[orange, line width=1.1pt] (P0) arc (95.26438968275465:24.735610317245346:1.5*\radius); 
                    \draw[orange, line width=1.1pt] (180-0*\angle:\radius) arc (76.77865488096036:43.22134511903964:3*\radius); 
                    \draw[orange, line width=1.1pt] (P0) arc (60.0:3.552713678800501e-15:1.0*\radius); 
                    \draw[orange, line width=1.1pt] (P1) arc (240.0:300.0:1.0*\radius); 
                    \draw[orange, line width=1.1pt] (P1) arc (219.71513723182622:320.2848627681738:0.65*\radius); 
                    \draw[orange, line width=1.1pt, -] (P1) -- (P4); 
                    \draw[orange, line width=1.1pt] (P3) arc (120.0:180.0:1.0*\radius); 
                    \draw[orange, line width=1.1pt] (P3) arc (99.71513723182622:200.28486276817378:0.65*\radius); 
                    
                    \draw[red, dashed, line width=0.5pt] (180--0.4*\angle:\radius) arc (306.1396692852105:353.8603307147895:2*\radius); 
                            
                    
                    \fill (P0) circle (2pt) node[label=-1*\angle*0+180:$z_m$] {};
                    \fill (P1) circle (2pt) node[label=-1*\angle*1+180:$r$] {};
                    \fill (P2) circle (2pt) node[label=-1*\angle*2+180:$u$] {};
                    \fill (P3) circle (2pt) node[label=-1*\angle*3+180:] {};
                    \fill (P4) circle (2pt) node[label=-1*\angle*4+180:$z_k$] {};
                    \fill (P5) circle (2pt) node[label=-1*\angle*5+180:$z_i$] {};
        
                \end{scope}

                \begin{scope}[shift={(8,-4.5)}]

                    \draw[line width=1pt] (0,0) circle (\radius);
                    
                    \pgfmathsetmacro\angle{360/\numPoints}
                    
                    \foreach \i in {1,...,\the\numexpr\numPoints-1\relax} {
                        \coordinate (P\i) at (-1*\angle*\i+180:\radius);
                    }
                    \coordinate (P0) at (180:\radius);
                    
                    
                    \draw[orange, line width=1.1pt] (P0) arc (300.0:360.0:1.0*\radius); 
\draw[orange, line width=1.1pt] (P0) arc (274.3410937267447:325.6589062732553:2.0*\radius); 
\draw[orange, line width=1.1pt] (P0) arc (258.2275924334342:341.7724075665658:1.3*\radius); 
\draw[orange, line width=1.1pt] (P0) arc (60.0:3.552713678800501e-15:1.0*\radius); 
\draw[orange, line width=1.1pt] (P3) arc (120.0:180.0:1.0*\radius); 
\draw[orange, line width=1.1pt] (P3) arc (99.71513723182622:200.28486276817378:0.65*\radius); 
\draw[orange, line width=1.1pt] (P4) arc (60.0:120.0:1.0*\radius); 
\draw[orange, line width=1.1pt] (P4) arc (39.71513723182621:140.28486276817378:0.65*\radius); 
\draw[orange, line width=1.1pt]  (180-4*\angle:\radius) arc (24.619977328657114:155.38002267134289:0.55*\radius); 
\draw[orange, line width=1.1pt] (180-4*\angle:\radius) arc (80.40593177313954:99.59406822686046:3*\radius); 

                    
                    \fill (P0) circle (2pt) node[label=-1*\angle*0+180:$r$] {};
                    \fill (P1) circle (2pt) node[label=-1*\angle*1+180:$z_i$] {};
                    \fill (P2) circle (2pt) node[label=-1*\angle*2+180:$u$] {};
                    \fill (P3) circle (2pt) node[label=-1*\angle*3+180:] {};
                    \fill (P4) circle (2pt) node[label=-1*\angle*4+180:$z_k$] {};
                    \fill (P5) circle (2pt) node[label=-1*\angle*5+180:$z_m$] {};
        
                \end{scope}

                \draw[thick, ->] (1.6, 1) -- (2.4, 1)  node[midway, above, text=blue] {$s$};

                \draw[thick, ->] (1.6+4, 1) -- (2.4+4, 1)  node[midway, above, text=blue] {$f^-$};

                \draw[thick, ->, looseness=.3] (8, -1.6) to[out=-90, in=90] node[above, text=blue]{$f^+$} (0, -4.5+1.6);

                \draw[thick, ->] (1.6, -3.5) -- (2.4, -3.5)  node[midway, above, text=blue] {$s$};

                \draw[thick, ->] (1.6+4, -3.5) -- (2.4+4, -3.5)  node[midway, above, text=blue] {$f^+$};


            \end{tikzpicture}
        \end{center}
        \caption{A simple example of actions required to add a connection.}\label{fig:transitivity_example_add}
    \end{figure}
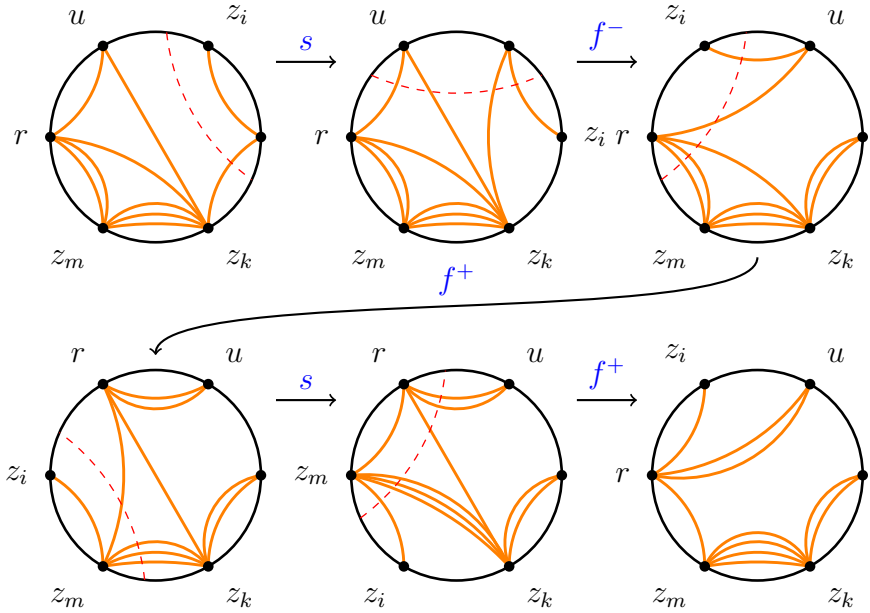
\end{proof}

\begin{theorem}\label{thm:transitivity}
    $J_n$ acts transitively on a set $X(l_1, l_2, \dots, l_n, l_\infty)$ when there exists some $l_i = 1$ (or $l_\infty = 1$).
\end{theorem}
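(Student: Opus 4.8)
The plan is to prove transitivity by induction on $n$, using Proposition~\ref{prop:homomorphism_to_S} and Corollary~\ref{corr:order_doesn't_matter} to normalize the cyclic order of the points and Lemmas~\ref{lemma:remove_connection} and~\ref{lemma:add_connection} to rearrange the connections. The base cases are immediate: for $n \le 2$ the valences determine all pairwise connection counts through a linear system (as in the proof of Lemma~\ref{lemma:unique_given_hole_n=3}), so $X$ has at most one element; and for $n = 3$ we invoke Theorem~\ref{thm:n=3}, observing that a valence-$1$ point forces border thickness $0$. So I fix $n \ge 4$, assume the result for all smaller configurations possessing a valence-$1$ point, and aim to show that every diagram can be carried to a single canonical diagram $D$ depending only on the valences. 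Since the action of $\phi\colon J_n \to S_n$ realizes every permutation of the points and, by Corollary~\ref{corr:order_doesn't_matter}, reordering equal-valence points is harmless, I may first bring any diagram to a fixed canonical cyclic order; it then suffices to canonicalize using operations that preserve the order of the non-special points, which is precisely the regime of Lemmas~\ref{lemma:remove_connection} and~\ref{lemma:add_connection}.

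The core is a peeling step that lowers $n$. Choose, once and for all, a circularly adjacent pair $z_k < z_m$ with $z_k, z_m \ne z_i$; such a pair exists because there are $n + 1 \ge 5$ gaps while $z_i$ meets only two of them. If the single arc at $z_i$ happens to land on $z_k$ or $z_m$, I first reposition it onto another point using a fissure operation, so that afterward $z_i$ is not connected to $z_k$ or $z_m$ and Lemma~\ref{lemma:add_connection} becomes applicable. A non-crossing diagram with $\min(l_k, l_m)$ arcs between the adjacent points $z_k$ and $z_m$ exists---deleting those arcs leaves a valence vector still meeting the evenness and maximum-degree constraints inherited from the original---and the same holds for every intermediate count, so each application of Lemma~\ref{lemma:add_connection} is justified by a witness. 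Iterating drives the $z_k$--$z_m$ count up to $\min(l_k, l_m)$; assuming $l_k \le l_m$, the point $z_k$ is now joined to $z_m$ alone by an innermost bundle of $l_k$ arcs.

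Deleting $z_k$ and decreasing the valence of $z_m$ by $l_k$ produces a diagram on $n - 1$ points in which $z_i$ still has valence $1$. By the inductive hypothesis this smaller configuration is transitive, so it admits a fixed canonical diagram $D'$ reachable from the reduced diagram. Because $z_k$ and $z_m$ are adjacent and the deleted arcs are innermost, every generator $\spq$ used in the reduced problem lifts to a generator of $J_n$ that transports $z_k$ together with $z_m$ and never disturbs the bundle; lifting the whole word carries the original diagram to $D = D' \cup \{\,l_k \text{ arcs between } z_k \text{ and } z_m\,\}$. Since the chosen pair, the target count $\min(l_k, l_m)$, and the canonical subproblem depend only on the valences, every diagram reaches this same $D$, giving transitivity.

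The main difficulty I anticipate is bookkeeping rather than a single sharp estimate: one must check that each invocation of Lemmas~\ref{lemma:remove_connection} and~\ref{lemma:add_connection} has the required witness and that $z_i$ remains detached from $z_k, z_m$ throughout, and---most delicately---make the lifting precise, i.e.\ describe exactly how the intervals $[p,q]$ of the generators in $J_{n-1}$ correspond to intervals in $J_n$ across the deletion of $z_k$ so that the saturated bundle is provably preserved. This last correspondence is where I would invest the most care.
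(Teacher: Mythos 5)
Your overall skeleton matches the paper's: induct on $n$, use Proposition~\ref{prop:homomorphism_to_S} to normalize the order of the points, use Lemmas~\ref{lemma:remove_connection} and~\ref{lemma:add_connection} to adjust the number of arcs between one chosen neighboring pair, contract that pair, and recurse. The genuine gap is in your peeling step. You drive the $z_k$--$z_m$ count up to $\min(l_k,l_m)$, and you justify the existence of the witnesses required by Lemma~\ref{lemma:add_connection} by asserting that deleting $\min(l_k,l_m)$ arcs between $z_k$ and $z_m$ ``leaves a valence vector still meeting the evenness and maximum-degree constraints.'' That assertion is false. Take $n=4$ with valences $l_i=1$, $l_k=l_m=5$, $l_u=3$, $l_\infty=0$ (sum $14$, maximum $5\le 9$, so diagrams exist). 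Saturating the pair would leave the residual vector $(1,0,0,3,0)$, whose maximum $3$ exceeds the sum $1$ of the other entries; indeed no diagram has five $z_k$--$z_m$ arcs at all, so the canonical diagram $D$ you aim for does not exist. Moreover, even when $\min(l_k,l_m)$ arcs are realizable, Lemma~\ref{lemma:add_connection} requires a witness in which $z_i$ is additionally not connected to $z_k$ or $z_m$, a condition your existence argument never addresses and which can also fail (in the same example, with the pair $(z_m,z_u)$ no diagram attains the count $\min(l_m,l_u)=3$ with $z_i$ detached, since $z_i$ would be forced onto $z_k$ and the residual valences of $z_k$ and $z_m$ could not be matched).

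The paper sidesteps all of this by not canonicalizing: it moves the initial diagram directly toward the \emph{target} diagram, choosing a neighboring pair that is disconnected from $z_i$ in both the initial and target configurations, so that the target itself is the witness for every application of Lemmas~\ref{lemma:remove_connection} and~\ref{lemma:add_connection}; no existence claim about extremal counts is ever needed. If you replace your target count $\min(l_k,l_m)$ by ``the count occurring in the target diagram,'' your argument collapses to essentially the paper's proof; your base cases (in particular handling $n=3$ via Theorem~\ref{thm:n=3} by noting that a valence-$1$ point forces border thickness $0$) and your contraction-and-lifting discussion are sound as stated.
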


\begin{proof}
    We proceed by induction on the number of points. The base case is when $n = 3$, in which case all elements of $X(l_1, l_2, l_3, l_\infty)$ are essentially the same except up to where the point $z_i$ is and where it is connected (since an arc diagram with 3 points and given valences is unique). Given some configuration, we can change which point $z_i$ is connected to by switching it with the point adjacent to it which it is not connected to. 
    Since we can also change which point $z_i$ is adjacent to but not connected to by switching it with the point to which it is connected, we can attach $z_i$ to whichever point we want. Then, the order can be established by switching it with the point to which it is connected.

    If $n > 3$, we will have some initial configuration and a target configuration. By Proposition~\ref{prop:homomorphism_to_S}, we can act on the initial configuration to get the points to be in the same order as the target configuration. 

    Now, we pick a pair of points neither of which is connected to $z_i$ in either the initial or the final configuration (since $n$ is at least 4, such a pair exists). Either the target configuration has more arcs between this pair, less arcs between this pair or they both have the same number of connections between this pair of points. In either case, we know that such a configuration exists (namely, the target configuration). Thus, by Lemma~\ref{lemma:remove_connection} and Lemma~\ref{lemma:add_connection}, we can achieve the target number of connections between the two points. 

    Then, we can treat these two points as if they are a single point in both the initial and target configuration. Since actions $s_{p, q}$ do not affect connections between points $y_a$ and $y_b$ for $p \leq a < b \leq q$, if we treat these two neighboring points a single point, the number of connections between them will not change when we do $J_n$ actions on them. Thus, we have reduced the number of points and we can proceed by induction. 
\end{proof}

\section{Actions of \texorpdfstring{$J_n$}{Jn} on the Set \texorpdfstring{$X(2, 2, \dots, 2)$}{X(2,2,...,2)}}\label{sec:single_invariant}

Given an arc diagram with all even valences, we can define a property of the diagram that we call the \emph{number of components}. We calculate this number by picking an arbitrary point to start with and then splitting the connecting lines in the diagram into closed, non overlapping loops according to the right-hand rule. We call the number of points in a component its \emph{size}. See Figure~\ref{fig:right_hand_rule_example} for an example.

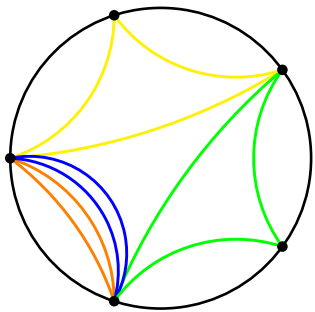
\begin{figure}
    \begin{center}
        \begin{tikzpicture}

            \def\radius{2cm}
            \pgfmathtruncatemacro\numPoints{5} 
            
            \draw[line width=1pt] (0,0) circle (\radius);
            
            \pgfmathsetmacro\angle{360/\numPoints}
            
            \foreach \i in {1,...,\the\numexpr\numPoints-1\relax} {
                \coordinate (P\i) at (-1*\angle*\i+180:\radius);
            }
            \coordinate (P0) at (180:\radius);
            
            
            \draw[yellow, line width=1.1pt] (P0) arc (288.0:360.0:1.0*\radius); 
            \draw[yellow, line width=1.1pt] (180-0*\angle:\radius) arc (274.2453850659197:301.7546149340803:4*\radius); 
            \draw[green, line width=1.1pt] (180-2*\angle:\radius) arc (130.24538506591975:157.75461493408025:4*\radius); 
            \draw[orange, line width=1.1pt] (P0) arc (72.00000000000001:-1.4210854715202004e-14:1.0*\radius); 
            \draw[blue, line width=1.1pt] (P0) arc (87.6018644658794:-15.601864465879409:0.75*\radius); 
            \draw[blue, line width=1.1pt] (P0) arc (101.4026940179763:-29.4026940179763:0.6464466094067263*\radius); 
            \draw[orange, line width=1.1pt] (180-0*\angle:\radius) arc (53.091146270135496:18.908853729864507:2*\radius); 
            \draw[yellow, line width=1.1pt] (P1) arc (216.0:288.0:1.0*\radius); 
            \draw[green, line width=1.1pt] (P2) arc (144.0:216.0:1.0*\radius); 
            
            \draw[green, line width=1.1pt] (P3) arc (72.0:144.0:1.0*\radius); 

            \foreach \i in {1,...,\the\numexpr\numPoints-1\relax} {
                \fill (P\i) circle (2pt);
            }
            \fill (P0) circle (2pt);

            
            \end{tikzpicture}
    \end{center}
    \caption{An example with 4 components.}\label{fig:right_hand_rule_example}
\end{figure}

This leads to an important lemma.

\begin{lemma}\label{lemma:components_invariant}
    The number of components is an invariant under actions of the group $J_n$ when all $l_i$ are even.
\end{lemma}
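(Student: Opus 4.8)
The plan is to analyze the action through the reflect-and-reconnect description of $\spq$ established in the proof that the action is well defined: $\spq$ reflects the active region across the perpendicular bisector of the shear line and then reconnects the arcs that were cut, reversing their order along the shear line. I would first record that when all $l_i$ are even the right-hand-rule decomposition into loops is canonical (independent of the chosen starting point), and that each loop is a closed curve that is either contained in the active region, contained in its complement, or crosses the shear line. Loops in the complement are literally untouched, so the whole content is to track the other two types.

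The first key step is to show that the reflection preserves the right-hand pairing at every internal vertex. At a vertex of valence $l_i$ the incident arc-ends sit in a radial order and the right-hand rule pairs consecutive ones, $\{1,2\},\{3,4\},\dots$; reflecting the active region reverses this radial order, and because $l_i$ is even the reversed consecutive pairing is the same pairing. Hence the reflection carries the loop structure inside the active region to a loop structure inside the reflected active region, giving a bijection on the loops that stay inside. This is exactly the place where evenness is used.

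The second, and main, step concerns loops crossing the shear line. Let $\sigma_A$ and $\sigma_B$ be the matchings on the (necessarily even number of) crossing points induced by following loops through the active region and through its complement, respectively; the number of crossing loops is a function of the pair $(\sigma_A,\sigma_B)$, and the reflection replaces $\sigma_A$ by its conjugate under the order-reversal $r$ of the crossing points while leaving $\sigma_B$ fixed. I would prove that each of $\sigma_A,\sigma_B$ is not only non-crossing but also \emph{non-nesting}, hence equals the pairing of adjacent crossings $\{1,2\},\{3,4\},\dots$. Non-nesting follows because the segment of a loop entering the active region immediately meets the boundary vertex to which its crossing arc is attached: if one crossing chord were nested strictly inside another, the inner segment would be trapped in the region bounded by the outer segment and the shear line, a region that meets the circular boundary only along the outer segment, so the inner segment could not reach its own boundary vertex -- a contradiction. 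Since the adjacent pairing is invariant under $r$, we get $r\sigma_A r=\sigma_A$, so the crossing loops are permuted among themselves and their number is unchanged; combined with the first step this shows the total number of components is preserved.

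The hard part will be making the non-nesting argument fully rigorous in the presence of high-valence vertices. After right-hand smoothing a vertex of valence $2k$ becomes $k$ parallel strands, so a single vertex can be traversed by several loop-segments, and only the outermost strand genuinely lies on the boundary; the clean ``trapped segment'' picture must then be replaced by an argument that peels off strands from the outside in (essentially an induction that mirrors the border of Lemma~\ref{lemma:border_thickness}), or by a careful isotopy pushing each strand to its boundary vertex. I expect this bookkeeping, rather than any new idea, to be the principal obstacle; everything else reduces to the two observations above.
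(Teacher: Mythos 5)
Your plan is the same as the paper's: reduce everything to the crossings of the shear line, show that the matching induced on them by the active region is the pairing of consecutive crossings, and note that this pairing is preserved when the order of the crossings is reversed. The paper's own proof, however, consists of a bare assertion of that key claim (``these intersections can be divided exactly into neighboring pairs\dots''), so what you are really doing is trying to supply the justification the paper omits. Your first step is correct and correctly locates the role of evenness: reversal of an even number of radially ordered arc-ends preserves the consecutive pairing $\{e_1,e_2\},\{e_3,e_4\},\dots$, so the reflection carries the loop structure of the active region to that of its image.

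The gap you flag in the non-nesting step is genuine and is not mere bookkeeping. Two distinct crossing-paths sharing a vertex is the generic situation, not an edge case: a single valence-four vertex all of whose arcs meet the shear line already carries two of them. In that situation the region bounded by the outer path and the shear line \emph{does} reach the circle (the outer path passes through boundary vertices), so the inner path is not trapped away from its vertices and no contradiction follows. Worse, the conclusion is sensitive to exactly the data your topological argument ignores: at that valence-four vertex the arcs are forced to join $e_1,e_2,e_3,e_4$ to $c_1,c_2,c_3,c_4$ in parallel, and if the vertex pairing were the other adjacent matching $\{e_2,e_3\},\{e_4,e_1\}$ (equally non-crossing, equally ``non-overlapping loops'') the induced matching on the crossings would literally be the nested one $\{c_1,c_4\},\{c_2,c_3\}$. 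So non-nesting is not a soft consequence of planarity plus non-crossing; it must use the specific pairing $\{e_1,e_2\},\{e_3,e_4\},\dots$. A clean way to close the gap is combinatorial rather than isotopic: list the arc-ends at the vertices of the active region in boundary order as $g_1,\dots,g_N$; non-crossing forces the crossing-bound ends to attach to $c_1,\dots,c_{2m}$ order-preservingly and forces every vertex-to-vertex arc to span an interval of $g$'s containing no crossing-bound end; evenness of all valences makes the union of the vertex pairings equal to the single global matching $\{g_1,g_2\},\{g_3,g_4\},\dots$; a parity count then shows $a_i\equiv i \pmod 2$ (where $g_{a_i}$ is the end attached to $c_i$) and confines the path entering at $c_i$ to the block of arc-ends between $g_{a_i}$ and $g_{a_{i\pm1}}$, forcing it to exit at the neighboring crossing. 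With that substituted for the trapped-segment argument, your proof is complete and, unlike the paper's, actually establishes the assertion both proofs rest on.
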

\begin{proof}
    Consider the intersections between arcs and the shear line of any given transformation given by $s_{i,j} \in J_n$. These intersections can be divided exactly into neighboring pairs of intersections both of which correspond to the same component. When the action is applied, each pair is matched up with another pair from a different component, so each loop that was broken remains a closed loop. Thus, the number of components remains unchanged. 
\end{proof}

\begin{theorem}\label{thm:l_n=2}
    The $J_n$ orbits over the set $X(2, 2,\dots,2)$ are completely characterized by the number of components. That is, there is exactly one orbit for every possible number of components.
\end{theorem}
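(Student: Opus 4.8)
The plan is to leverage the fact, established in Lemma~\ref{lemma:components_invariant}, that the number of components is already a $J_n$-invariant; the entire content of the theorem is therefore the \emph{converse}, namely transitivity of the action within each fixed value of this invariant, together with the observation that every attainable value is realized by some diagram. The first step I would take is to make the combinatorial structure explicit: when every valence equals $2$ the arc diagram is a disjoint union of non-crossing cycles (``polygons''), and the components are exactly the blocks of the resulting non-crossing partition of the $n+1$ points, with the \emph{size} of a component equal to the number of points on its cycle. Each block has size at least $2$, and this picture is consistent with the $n=3$ case: the three shapes of $X(2,2,2,2)$ correspond to the non-crossing partitions of four points into blocks of size $\ge 2$, and Theorem~\ref{thm:n=3} already merges the two partitions with two size-$2$ blocks into a single orbit. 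So the goal reduces to the purely combinatorial assertion that all non-crossing partitions of the $n+1$ points into exactly $k$ blocks (each of size $\ge 2$) lie in one $J_n$-orbit.

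To prove this I would fix, for each $k$, the canonical diagram $C_{n,k}$ consisting of one cycle of size $n+3-2k$ together with $k-1$ bigons, placed on consecutive arcs of the circle, and show that any diagram with $k$ components can be brought to $C_{n,k}$. Two families of moves drive this. The first is a \emph{rearrangement move}: since all points have equal valence, Corollary~\ref{corr:order_doesn't_matter} and the reflection description of the generators let me reposition the blocks into any non-crossing arrangement, in particular placing any two chosen components side by side as consecutive cycles sharing a single shear line. The second, and essential, is a \emph{resize move}: a single reversal $\spq$ straddling the junction of two adjacent components transfers one point from one block to the other, i.e.\ it sends the pair of sizes $(a,b)$ to $(a-1,b+1)$. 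The mechanism is exactly the reflect-and-reconnect rule of the action: the reversal severs the two boundary arcs of each of the two cycles, and because the reconnection along the shear line is order-reversing, the freed endpoints are re-paired across the junction, absorbing a vertex of one cycle into the other. Crucially, Lemma~\ref{lemma:components_invariant} guarantees this operation cannot change the number of components, so I only need to track the sizes. As a model case, on six points with blocks $\{0,1,2,3\}$ and $\{4,5\}$ the single reversal $s_{2,4}$ produces the two triangles $\{0,1,4\}$ and $\{2,3,5\}$, turning size profile $(4,2)$ into $(3,3)$; the general resize move is the same phenomenon localized at a junction.

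Assembling these, I would iterate: using the rearrangement move to bring a largest block next to another block and then the resize move to shave points off until all but one block are bigons, reaching $C_{n,k}$ regardless of the starting partition; since every diagram with $k$ components reduces to the same $C_{n,k}$, the action is transitive on that fiber. Finally, exhibiting $C_{n,k}$ for each $1 \le k \le \lfloor (n+1)/2\rfloor$ shows every admissible value of the invariant is attained exactly once, giving ``one orbit per number of components.'' I expect the main obstacle to be the resize move in full generality: one must verify that the \emph{uniquely determined} non-crossing reconnection of a boundary reversal really performs $(a,b)\mapsto(a-1,b+1)$ for arbitrary block sizes, and one must handle \emph{nested} rather than merely side-by-side components, where a block may first have to be routed into an adjacent innermost or outermost slot before the junction reversal applies. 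This verification is local to the junction and is best carried out by the reflect-and-reconnect bookkeeping (supported by figures in the style already used in the paper), with the invariance lemma doing the heavy lifting of ensuring $k$ is never disturbed.
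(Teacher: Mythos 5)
Your proposal is correct and follows essentially the same route as the paper: invariance is Lemma~\ref{lemma:components_invariant}, and transitivity comes from gathering components into flush position and transferring points across the junction of two adjacent components by a straddling reversal --- exactly the paper's shrink/grow moves $s_{z_j+q-1,\,e}$ and $s_{z_m,\,e+1}$, followed by induction on the number of components. The only difference is organizational (the paper matches the components of $b$ to those of $a$ one at a time and freezes each matched component as a valence-$0$ point, whereas you reduce both diagrams to a canonical form of one large cycle plus bigons), and the resize move you flag as the main obstacle does check out by the reflect-and-reconnect bookkeeping: a reversal covering $j$ points of one flush block and $k$ points of its neighbor transfers $j-k$ points from the first block to the second.
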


\begin{proof}
    One direction is given by Lemma~\ref{lemma:components_invariant}. To prove the other direction, we must prove that for any $a, b \in X(2, 2, \dots, 2)$ with the same number of components, we have a $g \in J_n$ such that $a = gb$. 

    Throughout this proof, note that the points all have the same valence so their order does not matter by Corollary~\ref{corr:order_doesn't_matter}. 
    
    Consider a component in diagram $a$ which has only one connecting line for which there are any arcs between it and the edge of the diagram (in a sense, it lies flush against the edge of the diagram). For example, the yellow, blue and purple components in Figure~\ref{fig:right_hand_rule_example} satisfy this condition while the green and red components do not. Let us call the endpoints of the external connecting line $z_j$ and $z_k$ where $z_j < z_k$.

    To transform $b$ into $a$, we first find $z_j$ in $b$ and transform the component it belongs to so that it also lies flush against the edge with $z_j$ as one of its endpoints. To do this, we apply the following algorithm until the condition is satisfied:
    \begin{itemize}
        \item Start with $c = z_j$ and increment $c$ until the point $c$ is no longer part of the same component.
        \item Define $d$ as the next point which is part of the same component as $z_j$.
        \item Apply $s_{c, d}$.
    \end{itemize}

    Now, there are three cases: the component that $z_j$ is the endpoint of in $b$ is larger than in $a$, it is smaller than in $a$, or they are the same size. If they are the same size, this step can be skipped.

    If we need to decrease the size of the component in $b$, first we pick a different component with endpoints $e$ and $g$ with $e < g$. Next, we apply $s_{z_j + q - 1, e}$ where $q$ is the size of the component in $a$.

    To increase the size of the component in $b$, we show that we can always increment it by 1. We locate the smaller endpoint of a component with size greater than 2 in diagram $b$ and denote it $e$. Such a component must exist since if it did not, the size of the component containing $z_j$ would already be maximal and a diagram $a$ with the same number of components but a bigger component containing $z_j$ could not exist. 
    We transform the component with endpoint $e$ in the same way we transformed the component containing $z_j$ so that it lay flush against the edge of the diagram.
    
    We denote the greater endpoint of the component containing $z_j$ in diagram $b$ as $z_m$. Finally, we apply the transformation $s_{z_m, e + 1}$, increasing the size of the component with endpoint $z_j$ by 1. 

    To complete the argument, note that we can treat the whole component that now matches between the two diagrams as a point with valence 0 (which can be moved around the diagram freely by switching it with its neighbor), and proceed inductively until we are left with a single component (which will have to be the same between the two diagrams).
\end{proof}

An interesting consequence of this is that we can calculate the number of orbits for any $n$ where all $l_i = 2$.

\begin{corollary}\label{corr:counting_orbits}
    The number of orbits given by actions of $J_n$ on $X(2,2,2,\dots,2)$ is $\left \lfloor{n/2}\right \rfloor$.
\end{corollary}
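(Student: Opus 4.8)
The plan is to deduce the count directly from Theorem~\ref{thm:l_n=2}. That theorem sets up a bijection between the $J_n$-orbits on $X(2,2,\dots,2)$ and the achievable values of the number-of-components invariant, so the whole problem reduces to determining exactly which integers can occur as the number of components of a valence-$2$ arc diagram. Throughout, let $n$ denote the number of points of the diagram (equivalently, the number of $2$'s appearing in the symbol, counting $l_\infty$). The first thing I would record is the structural simplification coming from all valences being $2$: each point is the endpoint of exactly two arcs, so following arcs partitions the arcs into disjoint closed loops, and these loops are precisely the components (the right-hand rule needs no choices here, since at a bivalent point an incoming arc has a unique continuation). Because an arc joins two \emph{distinct} points, the shortest possible loop is a bigon formed by two arcs between two points; hence every component passes through at least two points, i.e.\ has size at least $2$.

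With this in hand both bounds are immediate. If a diagram has $c$ components of sizes $s_1,\dots,s_c$, then
\[
\sum_{i=1}^{c} s_i = n, \qquad s_i \ge 2 \text{ for all } i,
\]
so $2c \le n$ and therefore $c \le \lfloor n/2\rfloor$; conversely the single $n$-gon whose arcs join consecutive points is a diagram with $c = 1$. Thus the number of components always lies in $\{1,2,\dots,\lfloor n/2\rfloor\}$, and no value outside this range can occur.

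The only substantive step that remains is realizability: I would show that every integer $k$ with $1 \le k \le \lfloor n/2\rfloor$ actually arises. For a given $k$ I would exhibit an explicit diagram built from $k-1$ bigons placed on disjoint blocks of two adjacent points, together with one further loop running in order through the remaining $n - 2(k-1)$ points. Since $k \le \lfloor n/2 \rfloor$ we have $n - 2(k-1) \ge 2$, so this leftover loop is a genuine cycle (a bigon when the leftover count is $2$, a single polygon otherwise; in particular when $n$ is odd and $k = \lfloor n/2\rfloor$ the leftover has size $3$ and one uses a triangle). Each loop occupies a contiguous block of points, so the whole diagram is drawn without crossings, a direct count gives valence $2$ at every point, and by construction the diagram has exactly $k$ components. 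Hence all of $1,\dots,\lfloor n/2\rfloor$ are attained.

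Combining the bounds with realizability, the invariant takes exactly the $\lfloor n/2\rfloor$ values $1,\dots,\lfloor n/2\rfloor$, and by the bijection of Theorem~\ref{thm:l_n=2} there are exactly $\lfloor n/2\rfloor$ orbits. I expect the genuine obstacle to be the tightness of the upper bound, which rests entirely on the claim that no component can have size $1$ — equivalently, that arcs connect distinct points so that there are no self-loops. This is what rules out the naive possibility of up to $n$ singleton components and forces the bound $\lfloor n/2\rfloor$ rather than something larger; by contrast the lower bound is trivial and the realizability argument is a routine explicit construction.
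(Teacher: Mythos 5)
Your proof is correct and follows essentially the same route as the paper: both reduce the count, via Theorem~\ref{thm:l_n=2}, to showing that the achievable numbers of components are exactly $1,\dots,\lfloor n/2\rfloor$, with the upper bound coming from every component having size at least $2$ and the lower values all being realizable. The only differences are cosmetic but welcome ones --- you realize each intermediate value by an explicit diagram of bigons plus one leftover loop, where the paper merely asserts that components can be merged down from the maximum, and you make explicit the convention (left implicit in the paper) that $n$ here must count all marked points including $z_\infty$ for the formula $\lfloor n/2\rfloor$ to come out right.
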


\begin{proof}
    There can always be 1 component but there cannot be more than $n/2$ components. Every value in between is possible since two components can be combined to reduce the number of components. 
\end{proof}

\section{Additional Relation on the Action of \texorpdfstring{$J_n$}{Jn}}\label{sec:simple_relation}

The next result follows from a general proof in \cite{chmutov2017berensteinkirillov}. We present a novel proof of this specifically for arc diagrams. 

\begin{theorem}\label{thm:braid_relation}
    When $J_n$ acts on the set $X(l_1, l_2, \dots, l_n, l_\infty)$, the braid relation, \[s_{i, i+1} s_{i-1, i} s_{i, i+1} = s_{i-1, i} s_{i, i+1} s_{i-1, i},\] is always satisfied. 
\end{theorem}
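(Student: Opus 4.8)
The plan is to exploit the fact that the braid relation involves only the three consecutive indices $i-1, i, i+1$, so both words act locally and can differ only in how they reconnect arcs near those three points. First I would record that, under the homomorphism $\phi\colon J_n\to S_n$ of Proposition~\ref{prop:homomorphism_to_S}, each generator $s_{j,j+1}$ maps to the adjacent transposition $(j\ j{+}1)$, and adjacent transpositions are well known to satisfy the braid relation in $S_n$. Hence $\phi$ sends both sides of the claimed identity to the same permutation, namely the reversal of the triple $(y_{i-1},y_i,y_{i+1})$. Consequently, applied to any arc diagram, the two words produce the same final ordering of the points, and it remains only to show that they reconnect the arcs identically.

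Next I would localize. Let $\Omega$ be the region cut off by the shear line enclosing exactly the three points $y_{i-1},y_i,y_{i+1}$. Both $s_{i-1,i}$ and $s_{i,i+1}$ have active regions contained in $\Omega$ and act as the identity outside $\Omega$, and every arc may be drawn so as to cross the boundary of $\Omega$ at most once. Thus both sides of the relation fix the exterior of $\Omega$ together with its arcs, so I only need to compare their effect inside $\Omega$. Collapsing the exterior of $\Omega$ to a single boundary point $z_\infty'$ turns the local picture into an arc diagram on the four boundary points $z_\infty', y_{i-1}, y_i, y_{i+1}$, that is, an element of some $X(l_{i-1},l_i,l_{i+1},l')$ with $n=3$, under which $s_{i-1,i}$ and $s_{i,i+1}$ become $s_{1,2}$ and $s_{2,3}$.

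The key technical step is to justify that this collapse intertwines the $J_n$-action with the $J_3$-action: since the three generators involved never alter the cyclic order in which the exterior arcs meet the shear line, and since the reconnection step is uniquely determined, no information needed to distinguish the two words is lost by collapsing. I expect this compatibility to be the main obstacle, since it requires verifying that reconnecting inside $\Omega$ and then re-expanding $z_\infty'$ agrees with performing the reconnection directly on the uncollapsed diagram; in general, two group elements with the same image permutation need not reconnect arcs identically, so the argument must genuinely use the locality of the three generators rather than only their image under $\phi$.

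Granting the reduction, I would finish with the $n=3$ case, where the structure is completely understood. By Lemma~\ref{lemma:unique_given_hole_n=3} a diagram with $n=3$ is pinned down by its valences together with the location of the break in its border, and by Lemma~\ref{lemma:only_2_diagrams} there are at most two such diagrams for fixed valences. Since both words induce the reversal permutation (from the first paragraph) and both preserve border thickness by Lemma~\ref{lemma:border_thickness}, the target diagram is forced whenever it is unique; in the remaining configurations, where two diagrams share the reversed order, a direct check of the same flavour as the casework summarized in Figure~\ref{fig:n=3_proof_example} shows that the two words select the same one. This finite verification, combined with the locality reduction, then yields the braid relation for every $X(l_1,\dots,l_n,l_\infty)$.
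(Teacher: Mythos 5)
Your reduction is the one the paper itself uses: both words move only the points at indices $i-1,i,i+1$, every arc can be drawn to cross the shear line of $s_{i-1,i+1}$ at most once, so the exterior can be absorbed into a single point and it suffices to verify the relation for an effective $n=3$ diagram. Your handling of the $n=3$ endgame is where the proposal has a genuine gap. Combining Lemmas~\ref{lemma:unique_given_hole_n=3} and~\ref{lemma:only_2_diagrams} with the invariance of border thickness (Lemma~\ref{lemma:border_thickness}) and the common image under $\phi$ does correctly dispose of the configurations in which only one diagram with the reversed order and the given border thickness exists. But generically there are exactly two such diagrams, and by Theorem~\ref{thm:n=3} the $J_3$-action is transitive on diagrams of equal border thickness, so \emph{no} invariant of the action can decide between them; in precisely those configurations your argument reduces the theorem to the sentence ``a direct check shows that the two words select the same one.'' That sentence is the theorem. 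Nothing you have set up determines which of the two candidate diagrams either word produces, so the decisive computation is deferred rather than performed.

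The paper's proof consists essentially of that computation: it tracks the five arc multiplicities through $s_{i,i+1}s_{i-1,i}s_{i,i+1}$ in all eight inequality cases (Figure~\ref{fig:s_proof_diagram}), and then, rather than redoing the work for the second word, invokes the cactus relation $s_{i-1,i+1}\,(s_{i,i+1}s_{i-1,i}s_{i,i+1})\,s_{i-1,i+1}=s_{i-1,i}s_{i,i+1}s_{i-1,i}$ to read off the effect of $s_{i-1,i}s_{i,i+1}s_{i-1,i}$ as the already-computed action conjugated by the reflection realizing $s_{i-1,i+1}$. Your classification-based framing is a reasonable way to shrink the casework, since it isolates exactly the ambiguous configurations, but to complete the proof you must still determine, in terms of the inequalities among the valences, where each word places the break in the border in those configurations --- or else adopt the paper's conjugation trick so that only one of the two words needs to be computed explicitly.
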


\begin{proof}
    We use casework to prove this result.

    Since the actions we are interested in only affect points at indices $i - 1$ to $i + 1$, we can represent the actions we perform by acting on an arc diagram with $n = 3$. In particular, $z_1, z_2$ and $z_3$ would be the points at indices $i-1$, $i$, and $i+1$, respectively, and $z_\infty$ would represent all other points in the original diagram.

    With such a setup, there are two distinct starting diagrams, shown on the left of Figure~\ref{fig:s_proof_diagram}. Now, we can calculate all possible end results after applying $s_{i, i+1} s_{i-1, i} s_{i, i+1}$, as shown in Figure~\ref{fig:s_proof_diagram}.

    \begin{figure}
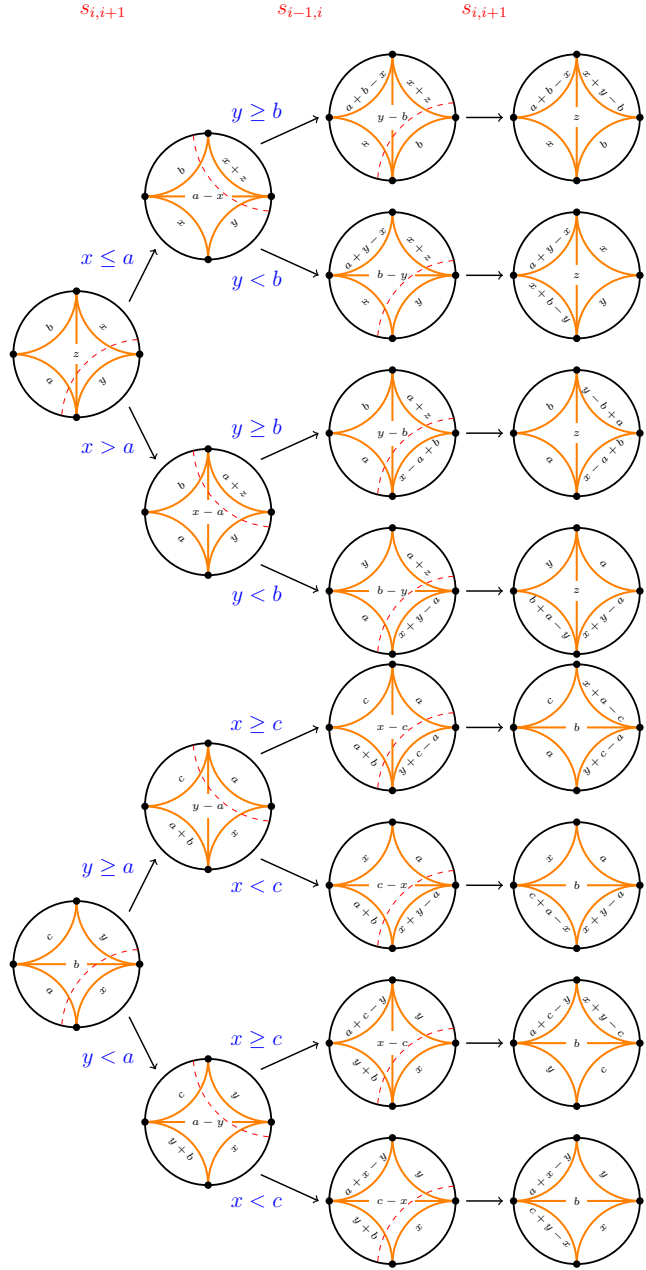

        \begin{center}
            \scalebox{0.7}{

            }
        \end{center}
        \caption{A representations of all possible results of applying $s_{i, i+1} s_{i-1, i} s_{i, i+1}$. The labels on each connection represent the number of connecting lines passing between that pair of points.}\label{fig:s_proof_diagram}
    \end{figure}
    
    Now, notice that $s_{i-1, i+1} s_{i, i+1} s_{i-1, i} s_{i, i+1} s_{i-1, i+1} = s_{i-1, i} s_{i, i+1} s_{i-1, i}$ by the defining relations of $J_n$. Thus, to find the result of applying $s_{i-1, i} s_{i, i+1} s_{i-1, i}$, we can use the same computations used in Figure~\ref{fig:s_proof_diagram}, except we must reflect the diagram over the horizontal line passing through its center before applying the first action and after applying the last action. 

    In each of the 8 cases in Figure~\ref{fig:s_proof_diagram}, we can manually verify that the effect of applying $s_{i, i+1} s_{i-1, i} s_{i, i+1}$ and $s_{i-1, i} s_{i, i+1} s_{i-1, i}$ is the same for any starting conditions, so the relation $s_{i, i+1} s_{i-1, i} s_{i, i+1} = s_{i-1, i} s_{i, i+1} s_{i-1, i}$ holds. 
\end{proof}

In the case when $l_1 = l_2 = \dots = l_\infty$ there is another relation that can be described.

\begin{theorem}\label{thm:second_relation}
    When $J_n$ acts on the set $X(l_1, l_2, \dots, l_n, l_\infty)$ where $l_1 = l_2 = \dots = l_\infty$, the relation \[(s_{1,n}s_{1,n-1})^{n(n+1)} = e,\] where $e$ is the identity element, is always satisfied. 
\end{theorem}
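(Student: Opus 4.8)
The plan is to show that the order of the element $c := s_{1,n}s_{1,n-1}$, acting on $X(l,l,\dots,l)$ (write $l := l_1 = \dots = l_\infty$), divides $n(n+1)$. The whole argument rests on separating an arc diagram into two pieces of data that $c$ rotates independently: the placement of the labels $z_\infty, z_1,\dots,z_n$ into the $n+1$ cyclic positions, and the arc-connectivity between positions (the multiplicities $m_{ij}$, which by the non-crossing condition determine the diagram up to continuous deformation). I label the position of $z_\infty$ as $0$ and work with positions in $\mathbb{Z}/(n+1)$.

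First I would record the action of $c$ on labels. By Proposition~\ref{prop:homomorphism_to_S}, the effect of $c$ on positions is the permutation $\phi(c)=\phi(s_{1,n})\phi(s_{1,n-1})$. A direct computation on $\{0,1,\dots,n\}$ gives $\phi(s_{1,n}): i\mapsto -i$ and $\phi(s_{1,n-1})$ fixing $0$ and $n$ while sending $i\mapsto n-i$ for $1\le i\le n-1$; composing, $\phi(c)$ fixes $z_\infty$ and sends $i\mapsto i+1$ on $\{1,\dots,n\}$, an $n$-cycle. Hence the label-placement returns to its original state after exactly $n$ applications of $c$.

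Second — and this is the heart — I would determine the action of $c$ on the connectivity. The claim is that, \emph{because all valences equal $l$}, the operation $s_{1,n}$ transforms connectivity by the reflection $\sigma: i\mapsto -i \pmod{n+1}$ (fixing position $0$), while $s_{1,n-1}$ transforms it by the reflection $\sigma': i\mapsto n-i \pmod{n+1}$ that interchanges positions $0$ and $n$. The subtle point is that, although the underlying homeomorphism of $s_{1,n-1}$ keeps the labels at $0$ and $n$ fixed, the shear-line reconnection rule (reversing the order of the arcs crossing the shear line) is precisely a reflection across the axis through the gap between positions $n$ and $0$; this axis swaps the connectivity of $0$ and $n$, an operation that lands back inside $X(l,\dots,l)$ exactly because $z_\infty$ and $z_n$ share the valence $l$. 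By contrast, for $s_{1,n}$ the complement of the active region is the single point $z_\infty$, so its reflection fixes position $0$. Granting this, the composite connectivity map (applying $s_{1,n-1}$ first, then $s_{1,n}$) is $\sigma\circ\sigma': i\mapsto -(n-i)\equiv i+1 \pmod{n+1}$, the rotation $R$ of the full $(n+1)$-gon by one step; consequently the connectivity returns to its original state after $n+1$ applications, i.e.\ $R^{n+1}=\mathrm{id}$.

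Finally I would assemble the two parts. Since the label-placement and the connectivity are independent data attached to a diagram, $c^k$ is the identity on $X(l,\dots,l)$ if and only if $\phi(c)^k=e$ (labels restored) and $R^k=\mathrm{id}$ (connectivity restored); the first requires $n\mid k$ and the second requires $(n+1)\mid k$. As $\gcd(n,n+1)=1$, the order of $c$ divides $\operatorname{lcm}(n,n+1)=n(n+1)$, which is exactly $(s_{1,n}s_{1,n-1})^{n(n+1)}=e$. I expect the main obstacle to be the rigorous verification of the second step — that the reconnection rule for $s_{1,n-1}$ really is the reflection $\sigma'$ swapping the connectivity roles of positions $0$ and $n$. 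I would establish this in the local spirit of the proof of Theorem~\ref{thm:braid_relation}: reduce to the picture near the shear line and track, by casework on how the arcs meet it, that reversing their crossing-order implements $\sigma'$. Here the hypothesis $l_1=\dots=l_\infty$ is indispensable, since without it $\sigma'$ would fail to preserve the valence data and $c$ would not act on $X$ by this clean rotation at all.
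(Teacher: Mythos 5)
Your proposal is correct and follows essentially the same route as the paper: both decompose the action of $c=s_{1,n}s_{1,n-1}$ into its effect on the label placement (period $n$, since $z_\infty$ is fixed and the rest are cyclically shifted) and its effect on the arc connectivity (period $n+1$, since the two generators act as reflections whose composite is the rotation by $\frac{2\pi}{n+1}$), with the equal-valence hypothesis justifying that $s_{1,n-1}$ acts on connectivity as a full reflection interchanging the positions of $z_\infty$ and $z_n$. The only difference is cosmetic --- you phrase the reflections as maps on $\mathbb{Z}/(n+1)$ and invoke $\operatorname{lcm}(n,n+1)=n(n+1)$ explicitly, whereas the paper argues geometrically with diameters of the disc.
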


\begin{proof}
    We consider the effect of this action on the order of the points and the connecting lines within the arc diagram separately.

    The order of the points is periodic every $n$ applications of $s_{1,n}s_{1,n-1}$ since this transformation rotates all of the points except for $z_\infty$ by one position clockwise.

    Now, we consider the effect of $s_{1, n-1}$ on the connecting lines. We claim that this transformation is equivalent to reflecting the whole diagram over the diameter which connects the gap between points $z_n$ and $z_\infty$ with the opposite side of the diagram (either $z_{n/2}$ or the region between $z_{(n+1)/2}$ and $z_{(n-1)/2}$, depending on the parity of $n$).
    
    In the case of the active region, this is true because that is how the action of $J_n$ is defined. The two points outside of the active region ($z_n$ and $z_\infty$) both have the same number of connecting lines intersecting the shear line, so they are identical from the point of view of the active region. Thus, while they do not get explicitly reflected by $s_{1, n-1}$, the reflection would have not effect on them regardless so $s_{1,n-1}$ is equivalent to the reflection described.

    Next, we claim that the effect of $s_{1, n}$ is equivalent to reflecting over the diameter connecting $z_\infty$ and the opposite side of the circle (either $z_{(n+1)/2}$ or the region between $z_{n/2}$ and $z_{(n+2)/2}$, depending on the parity of $n$). This follows directly from the definition of the transformation corresponding to $s_{1, n}$. 

    In summary, the transformation corresponding to $s_{1, n} s_{1, n-1}$ is equivalent to two reflections over diameters of the diagram with an angle of $\frac{\pi}{n+1}$ between them. This is also the same as a rotation of $\frac{2\pi}{n+1}$. Thus, repeating this action $n+1$ times is the identity transformation. 

    Combining these two aspects of the diagram and their respective periodicities, we find that $(s_{1,n}s_{1,n-1})^{n(n+1)} = e$.
\end{proof}

\printbibliography

\end{document}